\documentclass[a4paper,11pt]{article}
\usepackage{amsmath}
\usepackage{amsfonts}
\usepackage{supertabular}
\usepackage[latin9]{inputenc}
\usepackage[english]{varioref}
\usepackage{dcolumn}
\usepackage[height=22cm , width = 16cm , top = 4cm , left = 3cm, a4paper]{geometry}
\usepackage[a4paper]{geometry}
\usepackage[final]{graphicx}
\usepackage{epsfig}
\usepackage{pstricks}
\usepackage{psfrag}
\usepackage{rotating}
\usepackage{supertabular}
\usepackage{booktabs}
\usepackage{delarray}
\usepackage{rotating}
\usepackage{subfigure}
\usepackage{nextpage}
\usepackage{layout}
\usepackage{amsthm}
\usepackage{dsfont}
\usepackage{hyperref}
\usepackage[hypcap]{caption}
\usepackage{color}
%
%

\numberwithin{equation}{section}

\theoremstyle{plain}
\newtheorem{thm}{Theorem}[section]

\newtheorem{lem}[thm]{Lemma}

\newtheorem{defn}[thm]{Definition}

\parindent0cm
\newcommand{\enter}{\bigskip}

%
\date{}
%
%

\begin{document}
 \author{ Prasanta Kumar Barik and {Ankik Kumar Giri}\footnote{Corresponding author. Tel +91-1332-284818 (O);  Fax: +91-1332-273560  \newline{\it{${}$ \hspace{.3cm} Email address: }}ankikgiri.fma@iitr.ac.in}\\
\footnotesize Department of Mathematics, Indian Institute of Technology Roorkee,\\ \small{ Roorkee-247667, Uttarakhand, India}\\
  }

\title{Weak solutions to the continuous coagulation model with collisional breakage}

\maketitle

\hrule \vskip 11pt

\begin{quote}
{\small {\em\bf Abstract.} A global existence theorem on weak solutions is shown for the continuous coagulation equation with collisional breakage under certain classes of unbounded collision kernels and distribution functions. This model describes the  dynamics of particle growth when binary collisions occur to form either a single particle via coalescence or two/more particles via breakup with possible transfer of mass. Each of these processes may take place with a suitably assigned probability depending on the volume of particles participating in the collision. The distribution function may have a possibility to attain an algebraic singularity for small volumes. \enter
}
\end{quote}
\noindent
{\bf Keywords:} Coalescence; Collisional breakage; Weak compactness; Existence.\\
{\rm \bf MSC (2010).} Primary: 45K05, 45G99, Secondary: 34K30.\\

\vskip 11pt \hrule

\section{Introduction}
Coagulation and breakage processes arise in the different fields of science and engineering, for instance, chemistry (when a matter (water vapor) changes from its gas phase to a liquid phase by condensation process, the molecules in the gas start to come together to form bigger and bigger droplets (dew drops) of the liquid phase), astrophysics (formation of the planets), atmospheric science (raindrop breakup), biology (aggregation of red blood cells), etc. The basic reactions between particles taken into account are the coalescence of a pair of particles to form bigger particles and the breakage of particles into smaller pieces. In general, coagulation event is always a nonlinear process. However, the breakage process may be divided into two different categories on the basis of breakage behaviour of particles, $(i)$ \emph{linear breakage} and $(ii)$ \emph{collisional or nonlinear breakage}. Due to the external forces or spontaneously (that depends on the nature of particles), linear breakage occurs whereas the collisional breakage happens due to the collision between a pair of particles. It is worth to mention that the smaller particles are only produced due to the linear breakage process while the collisional breakage allows some transfer of mass between a pair of particles and might produce particles of mass larger than one of each colliding particles. Here, the volume (or size) of each particle  is denoted by a positive real number. 
 Now, let us turn to the mathematical model considered in this work. We first take a closed system of particles undergoing binary collisions such that any number of particles are produced by the collision, subject to the constraint that the sum of the volumes of the product particles is equal to the sum of the volumes of the two original particles. The following three possible outcomes may arise in such a process;

 \begin{itemize}
   \item if only one particle is produced by the collision, then a coagulation event occurs,
   \item if the collision process gives two particles, then the collision was either elastic or volume (or mass) was exchanged between the original particles,
   \item  if three or more particles emerge from the collision, then a breakage event takes place.
 \end{itemize}
  The continuous coagulation and collisional breakage model has been studied in \cite{Brown:1995, Safronov:1972, Vigil:2006, Wilkins:1982} to describe the evolution of raindrops in clouds.
If the particle size distribution is represented by the number density $g=g(z, t)$ for volume $z \in \mathbb{R}_{+}:=(0, \infty)$ at time $t \in[0, \infty)$, the continuous coagulation equation with collisional breakage read, as
\begin{eqnarray}\label{ccecfe}
\frac{\partial g}{\partial t}  =\mathcal{C}(g)+\mathcal{B}(g),
\end{eqnarray}
  where the coalescence term $\mathcal{C}(g):= \mathcal{C}_1(g)-\mathcal{C}_2(g)$,
\begin{eqnarray*}\label{C1}
\mathcal{C}_1 (g)(z, t) := \frac{1}{2}\int_0^z E(z-z_1, z_1 ) \Phi (z-z_1, z_1)g(z-z_1, t)g(z_1, t)dz_1,
\end{eqnarray*}

\begin{eqnarray*}\label{C2}
\mathcal{C}_2 (g)(z, t) : = \int_{0}^{\infty} E(z, z_1 )  \Phi(z, z_1)g(z, t)g(z_1, t)dz_1,
\end{eqnarray*}
and breakup term $\mathcal{B}(g):=\mathcal{B}_1(g)-\mathcal{B}_2(g)$,
 \begin{align*}
\mathcal{B}_1 (g)(z, t) :=  & \frac{1}{2} \int_{z}^{\infty} \int_{0}^{z_1}P(z|z_1-z_2;z_2)[1- E(z_1-z_2, z_2 )]\nonumber\\
 &~~~~~~~~~~~~~~~\times \Phi(z_1-z_2, z_2)g(z_1-z_2, t)g(z_2, t)dz_2dz_1,
\end{align*}

\begin{eqnarray*}\label{B2}
\mathcal{B}_2 (g)(z, t): = \int_{0}^{\infty} [1- E(z, z_1 )] \Phi(z, z_1)g(z, t)g(z_1, t)dz_1.
\end{eqnarray*}
  Adding $\mathcal{C}_2$ and $\mathcal{B}_2$, we obtain

  \begin{eqnarray*}\label{B2}
\mathcal{B}_3(g)(z, t) : = \int_{0}^{\infty}  \Phi(z, z_1)g(z, t)g(z_1, t)dz_1.
\end{eqnarray*}

Hence, (\ref{ccecfe}) can also be written in the following equivalent form
  \begin{align}\label{ccecfe1}
\frac{\partial g}{\partial t}  =\mathcal{C}_1(g)-\mathcal{B}_3(g)+\mathcal{B}_1(g),
\end{align}
with the following initial data
\begin{align}\label{in1}
g(z,0) = g_{0}(z)\geq 0\ \ \mbox{a.e.}
\end{align}
Here, $\Phi(z, z_1)$ denotes the collision kernel, which describes the rate at which  particles of volumes $z$ and $z_1$ are colliding and $E(z, z_1 )$ is the probability that the two colliding particles aggregate to form a single one. If they do not (an event which occurs with probability $1- E(z, z_1 )$) they undergo breakage with possible transfer of mass. In addition, both the collision kernel $\Phi$ and collision probability $E$ are symmetric in nature, i.e. $\Phi(z, z_1)=\Phi(z_1, z)$ and $E(z, z_1)= E(z_1, z)$ with $0 \leq E(z, z_1) \leq 1$, $ \forall  (z, z_1) \in \mathbb{R}_{+} \times \mathbb{R}_{+}$. Next, $P(z|z_1;z_2)$ is a distribution function describing the expected number of particles of volume $z$ produced from the breakage event arising from the collision of particles of volumes $z_1$ and $z_2$. \\

The first integral $\mathcal{C}_1(g)$ and the second integral $\mathcal{C}_2(g)$ of (\ref{ccecfe}) represent the formation and disappearance, respectively, of particles of volume $z$ due to coagulation events. On the other hand, the third integral $\mathcal{B}_1(g)$ represents the birth of particles of volumes $z$ due to the collisional breakage between a pair of particles of volumes $z_1-z_2$ and $z_2$, 
 and the last integral $\mathcal{B}_2(g)$  describes the death of particles of volume $z$ due to collisional breakage between a pair of particles of volumes $z$ and $z_1$. The factor $1/2$ appears in the integrals $\mathcal{C}_1(g)$ and $\mathcal{B}_1(g)$ to avoid double counting of formation of particles due to coagulation and collisional breakage processes.\\

The distribution function $P$ has the following properties:\\
$(i)$ $P$ is non-negative and symmetric with respect to $z_1$ and $z_2$, i.e. $P(z|z_1; z_2)=P(z|z_2; z_1) \geq 0$,\\

$(ii)$ The total number of particles resulting from the collisional breakage event is given by
\begin{align}\label{Total particles}
\int_{0}^{z_1+z_2}P(z|z_1; z_2)dz = N,\ \text{for all}\  z_1>0\ \text{and}\ z_2>0,\  P(z|z_1;z_2)=0\  \text{for}\  z> z_1+z_2,
\end{align}

$(iii)$ A necessary condition for mass conservation during collisional breakage events is
\begin{align}\label{mass1}
\int_{0}^{z_1+z_2} z P(z|z_1;z_2)dz = z_1+z_2,\ \ \text{for all}\ z_1>0\ \text{and}\ z_2>0.
\end{align}
 From the condition (\ref{mass1}), the total volume $z_1+z_2$ of particles remains conserved during the collisional breakage of particles of volumes $z_1$ and $z_2$.\\

Next, let us mention some particular cases of the continuous coagulation and collisional breakage equation. When $E(z, z_1)=1$, then equation (\ref{ccecfe1}) becomes the continuous \emph{Smoluchowski coagulation equation} \cite{Barik:2017, Barik:2018, Giri:2010A, Giri:2012}.
 Another case taken into consideration is the collision between a pair of particles of volumes $z$ and $z_1$ that results in either the coalescence of both into of volumes $(z+z_1)$ or into an elastic collision leaving the incoming clusters unchanged. In both cases $P(z|z;z_1)=P(z_1|z;z_1)=1$ and $P(z^{\ast}|z;z_1)=0$ if $z^{\ast} \notin \{ z, z_1 \} $ which again, reduces (\ref{ccecfe1}) into the continuous Smoluchowski coagulation equation with $(E(z, z_1) \Phi(z, z_1))$ as the coagulation rate. Now, by substituting $E=0$ and $P(z|z_1;z_2)=\chi_{[z, \infty)}(z_1)B(z|z_1; z_2) + \chi_{[z, \infty)}(z_2)B(z|z_2; z_1)$ into (\ref{ccecfe1}), it can easily be seen that (\ref{ccecfe1}) becomes the \emph{pure nonlinear breakage model} which has been extensively studied in many articles, \cite{Cheng:1990, Cheng:1988, Matthieu:2007, Kostoglou:2000, Laurencot:2001}. In these articles, the authors have been considered when a pair of particles collide, one particle fragment into smaller pieces without transfer of masses from other one. The continuous nonlinear breakage equation reads as
\begin{align}\label{nonlinear breakage}
\frac{\partial g(z, t)}{\partial t}  = &- \int_{0}^{\infty} \Psi (z, z_1)g(z, t)g(z_1, t)dz_1\nonumber\\
 & + \int_{z}^{\infty}\int_{0}^{\infty} B(z|z_1; z_2) \Psi(z_1, z_2) g(z_1, t)g(z_2, t)dz_2 dz_1,
\end{align}
where $\Psi(z, z_1)=\Psi(z_1, z) \geq 0$ is the collisional kernel and  $B(z|z_1; z_2)$ denotes the breakup kernel or breakage function, which represents particle of volume $z$ obtained by collision between particles of $z_1$ and $z_2$ and satisfies the following property
\begin{align*}
\int_0^{z_1}zB(z|z_1; z_2)dz =z_1,\ \ \ z <z_1 \in \mathbb{R}_{+} \ \text{and}\ z_2 \in \mathbb{R}_{+}.
\end{align*}

Finally, we define moments of number density $g$. Let $\mathcal{M}_r(t)$ denotes the $r^{th}$ moment of $g$ which is defined as
\begin{align*}
\mathcal{M}_r(t):=\mathcal{M}_r(g(z, t)) := \int_0^{\infty} z^r g(z, t)dz,\ \ \text{ where}\ \ r \geq 0.
\end{align*}
The zeroth and first moments represent the total number of particles and the total mass of particles, respectively. In collisional breakage events, the total number of particles, i.e. $\mathcal{M}_0(t)$, increases whereas $\mathcal{M}_0(t)$ decreases during coagulation events. In addition, it is expected that the total mass of the system remains constant during these events. However, sometimes the mass conserving property breaks down due to the rapid growth of coagulation kernels, ($E\Phi$), compare to the breakage kernels, ($[1-E]\Phi $). Hence, \emph{gelation} may appear in the system.\\

In this work, we mainly address the issue on the existence of weak solutions to the continuous coagulation and collisional breakage equation (\ref{ccecfe1})--(\ref{in1}). The existence and uniqueness of solutions to the classical coagulation-fragmentation equations have been discussed in several articles by applying various techniques, see \cite{Barik:2017, Barik:2018, Giri:2010A, Stewart:1989, Stewart:1990}. However, best to our knowledge, the mathematical theory on the continuous coagulation and collisional breakage equation has not been rigorously studied. Although there are a few articles available which are devoted to (\ref{ccecfe1})--(\ref{in1}), see \cite{Brown:1995, Laurencot:2001, Safronov:1972, Vigil:2006, Wilkins:1982}. This model has been described in \cite{Safronov:1972, Wilkins:1982}. In particular, in \cite{Laurencot:2001}, the existence of mass conserving weak solutions to the discrete version of (\ref{ccecfe1})--(\ref{in1}) has been shown by using a weak $L^1$ compactness method. Moreover, they have also studied the uniqueness of solutions, long time behaviour in some particular cases and occurrence of gelation transition. In \cite{Brown:1995}, the structural stability of the continuous coagulation and collisional breakage model is studied by applying both analytical method and numerical experiment. Later in \cite{Vigil:2006}, the partial analytical solutions to the discrete (\ref{ccecfe1})--(\ref{in1}) is studied for the constant collision kernel. Moreover, this solution is also compared with Monte-Carlo simulation. In addition, there are a few articles in which analytical solutions to the continuous nonlinear breakage equations have been investigated for some specific collision kernels only, see \cite{Cheng:1990, Cheng:1988, Matthieu:2007, Kostoglou:2000}. However, in general, it is quite delicate to handle the continuous nonlinear breakage equation mathematically because here the small sized particles have the tendency to fragment further into very small sized clusters which leads to the formation of an infinite number of clusters in a finite time. In order to overcome this situation, we consider a fully nonlinear continuous coagulation and collisional breakage model (\ref{ccecfe1}). Best to our knowledge, this is the first attempt to show the existence of global weak solutions to the continuous coagulation and collisional breakage equation (\ref{ccecfe1})--(\ref{in1}) for large classes of unbounded collision kernels and distribution function.\\

The paper is organized in the following manner: In Section 2, we state some definitions, assumptions and lemmas, which are essentially required in subsequent sections. The statement of main existence theorem is also given at the end of this section. Section 3 contains the rigorous proof of the existence theorem which relies on a weak $L^1$ compactness method.

\section{Definitions and Results}
Let us define the following Banach space $\mathcal{S}^+$ as
\begin{align*}
\mathcal{S}^+:=\{  g \in L^1(\mathbb{R}_{+}, dz): \|g\|_{L^1(\mathbb{R}_{+}, (1+z)dz)} < \infty\ \text{and}\ g \geq 0\ \text{a.e.} \},
 \end{align*}
where
\begin{align*}
\|g\|_{L^1(\mathbb{R}_{+}, (1+z)dz)}:=\int_{0}^{\infty}(1+z)|g(z)|dz.
\end{align*}
 We can also define the norms in the following way:
\begin{align*}
\|g\|_{L^1(\mathbb{R}_{+}, zdz)}:=\int_{0}^{\infty}z|g(z)|dz
\end{align*}
and
\begin{align*}
\|g\|_{L^1(\mathbb{R}_{+}, dz)}:=\int_{0}^{\infty}|g(z)|dz,\ \text{where}\ g\in \mathcal{S}^+.
\end{align*}

 Next, we formulate weak solutions to (\ref{ccecfe1})--(\ref{in1}) through the following definition:
\begin{defn}\label{def1} Let $T \in \mathbb{R}_{+}$. A solution $g$ of (\ref{ccecfe1})--(\ref{in1}) is a non-negative continuous function $g: [0,T]\to \mathcal{S}^+$ such that, for a.e. $z\in \mathbb{R}_{+}$ and all $t\in [0,T]$,\\
$(i)$  the following integrals are finite
     \begin{align*}
     &\int_{0}^{t}\int_{0}^{\infty}\Phi(z, z_1)g(z_1,s)dz_1ds<\infty,\ \ \mbox{and}\ \ \int_{0}^{t} \mathcal{B}_1(g)(z, s)ds<\infty, 
     \end{align*}
$(ii)$  the function $g$ satisfies the following weak formulation to (\ref{ccecfe1})--(\ref{in1})
\begin{align*}
g(z,t)=& g_0(z)+\int_{0}^{t} [\mathcal{C}_1(g)(z, s)-\mathcal{B}_3(g)(z, s)+\mathcal{B}_1(g)(z, s)] ds.
\end{align*}
\end{defn}

Now, throughout the paper, we assume the following conditions on collision kernel $\Phi$, distribution function $P$, and the probability function $E$:\\

$(\Gamma_1)$ $\Phi$ is  non-negative measurable function on $\mathbb{R}_{+} \times \mathbb{R}_{+}$,\\
\\
$(\Gamma_2)$  $\Phi(z, z_1) =  k_1 (z^{\alpha}{z_1}^{\beta}+{z_1}^{\alpha}z^{\beta})$ for all $(z, z_1)\in \mathbb{R}_{+} \times \mathbb{R}_{+}$, $0< \alpha \leq \beta <1$ and for some constant $k_1 \geq 0$,\\
\\
$(\Gamma_3)$ $E$ satisfies the following condition locally:
\begin{align*}
\frac {N-2}{N-1} \leq E(z, z_1) \leq 1, \ \ \ \ \forall (z, z_1) \in (0, 1) \times (0, 1),
\end{align*}
where $N$ is given in (\ref{Total particles}),\\
\\
$(\Gamma_4)$  for each $W>0$ and for $z_1 \in (0, W),$ $0 < \alpha \leq \beta < 1$ (introduce in $(\Gamma_2)$) and any measurable subset $U$ of  $(0, 1)$ with $|U|
\leq \delta$, we have
\begin{align*}
\int_{0}^{z_1} \chi_{U}(z)   P(z|z_1-z_2; z_2) dz \leq \Omega_1(|U|) z_1^{-\alpha}, \  \text{where}\  \lim_{\delta \to 0} \Omega_1 ( \delta )=0,
\end{align*}
where $|U|$ denotes the Lebesgue measure of $U$ and $\chi_{U}$ is the characteristic function of $U$ given by
\begin{equation*}
\chi_{U}(z):=\begin{cases}
1,\ \ & \text{if}\ z\in U, \\
0,\ \ &  \text{if}\ z\notin U,
\end{cases}
\end{equation*}
\\
$(\Gamma_5)$ for $z_1+z_2 > W,$ we have $P(z|z_1; z_2) \leq k(W)z^{-\tau_2} $ for $z\in (0, W)$, where $\tau_2 \in [0,1)$ and $k(W)>0$.\\

 Let us take the following example of distribution function $P$ which satisfies $(\Gamma_4)$--$(\Gamma_5)$.
  \begin{align*}
  P(z|z_1;z_2)= (\nu +2)\frac{z^{\nu}}{{(z_1+z_2)}^{\nu +1}},\ \text{where}\ \  -2 <\nu \leq 0\  \text{and}\  z<z_1+z_2.
 \end{align*}
For $\nu =0$, this leads to the case of binary breakage and for $-1< \nu \leq 0$, we get the finite number of particles, which is denoted by $N$ and written as $N = \frac{\nu +2}{\nu +1} $. But, for $-2< \nu < -1$, we obtain an infeasible number of particles and for the case of $\nu =-1$, an infinite number of daughter particles are produced. It is clear from (\ref{Total particles}).\\

 Now, $(\Gamma_4)$ is checked in the following way: for $z_1\in (0, W)$ and $W>0$ is fixed,
\begin{align*}
\int_{0}^{z_1} \chi_{U}(z)  P(z|z_1-z_2; z_2)dz = (\nu +2)  \int_{0}^{z_1} \chi_{U}(z) \frac{z^{\nu}}{{z_1}^{\nu +1}} dz.
\end{align*}
For $\alpha <1$, and applying H\"{o}lder's inequality, we get
\begin{align*}
\int_{0}^{z_1} \chi_{U}(z)  P(z|z_1-z_2; z_2)dz \leq & (\nu +2) {z_1}^{-\nu -1}  |U|^{\alpha} \bigg( \int_0^{z_1} z^{ \frac{\nu}{1-\alpha}}dz \bigg)^{1-\alpha}\nonumber\\
=&(\nu +2) {z_1}^{-\nu -1}  |U|^{\alpha} \bigg(\frac{{z_1}^{\frac{\nu}{1-\alpha} +1}}{\frac{\nu}{1-\alpha} +1} \bigg)^{1-\alpha}, \mbox{~for~} \nu-\alpha > -1 \nonumber\\
\leq & (\nu +2) \bigg(\frac {1-\alpha }{\nu +1-\alpha}\bigg)^{1-\alpha} |U|^{\alpha} z_1^{-\alpha}.
\end{align*}
 This implies that
\begin{align*}
\int_{0}^{z_1} \chi_{U}(z) P(z|z_1-z_2;z_2)dz \leq \Omega_1 (|U| )z_1^{-\alpha}.
\end{align*}

 In order to verify the $(\Gamma_5),$ for $ {z_1+z_2} > W $ and $W>0$ is fixed, we have
\begin{align*}
 P(z|z_1; z_2)= (\nu +2)\frac{z^{\nu}}{{(z_1+z_2)}^{\nu +1}}\leq (\nu +2) \frac{z^{\nu}}{W ^{1+\nu}}\leq  k(W) z^{-\tau_2},
\end{align*}
where  $-1<\nu \leq 0$, $\tau_2 =-\nu \in [0,1)$  and  $k(W)\geq \frac{\nu +2}{W^{1+\nu}}$.\\

Now we are in the position to state the following existence result:
\begin{thm}\label{existmain theorem1}
Suppose that $(\Gamma_1)$--$(\Gamma_5)$ hold and assume that the initial value $g_0\in \mathcal{S}^+$. Then, (\ref{ccecfe1})--(\ref{in1}) has a weak solution $g$ on $[0, \infty)$ in the sense of Definition \ref{def1}. Moreover,\\ $\|g(t)\|_{L^1(\mathbb{R}_{+}, zdz)} \le \|g_0\|_{L^1(\mathbb{R}_{+}, zdz)}$ and $g \in \mathcal{S}^+ $.
\end{thm}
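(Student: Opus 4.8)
The plan is to obtain the solution by a truncation-and-compactness argument, which is the standard weak $L^1$ compactness method for coagulation–fragmentation systems. First I would introduce a sequence of approximate problems by replacing $\Phi$ with the bounded, compactly supported truncation $\Phi_n(z,z_1):=\Phi(z,z_1)\chi_{(0,n)}(z)\chi_{(0,n)}(z_1)$, and correspondingly cutting the gain terms so that the dynamics stays supported in $(0,n)$ with initial datum $g_0^n:=g_0\chi_{(0,n)}$. On the truncated domain the kernels are bounded and, thanks to $(\Gamma_4)$–$(\Gamma_5)$, the integrals against the singular distribution $P$ are finite, so the right-hand side $\mathcal{C}_1^n-\mathcal{B}_3^n+\mathcal{B}_1^n$ defines a locally Lipschitz map on $L^1((0,n),dz)$. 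The classical theory of ordinary differential equations in Banach spaces then yields a unique solution $g^n$, and non-negativity is preserved because the loss contributions are linear multiples of $g^n$.

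Next I would establish the a priori estimates that drive the compactness argument. The central one is the mass control: testing the approximate equation against the weight $z$ and using the mass-conservation identity $(\ref{mass1})$ together with the symmetry of $\Phi$ and $E$, the coagulation and breakage contributions to $\frac{d}{dt}\mathcal{M}_1(g^n)$ cancel up to a non-positive remainder (the truncation only discards mass), giving $\|g^n(t)\|_{L^1(\mathbb{R}_+,zdz)}\le\|g_0\|_{L^1(\mathbb{R}_+,zdz)}$ uniformly in $n$ and $t$; this rules out escape of mass to infinity. For equi-integrability I would invoke the de la Vallée-Poussin theorem to select a convex $\Upsilon$ with $\Upsilon(r)/r\to\infty$ and $\int\Upsilon(g_0)\,dz<\infty$, and then propagate $\int\Upsilon(g^n(z,t))\,dz$ by a Gronwall estimate. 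Here the hypotheses are decisive: $(\Gamma_4)$ and $(\Gamma_5)$ bound the number of daughter particles produced by the singular $P$ in the small- and large-volume regions, while $(\Gamma_3)$ controls the number of particles created by collisional breakage on $(0,1)\times(0,1)$, preventing a blow-up of the zeroth moment that would break uniform integrability near the origin.

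With the uniform $L^1(zdz)$ bound and uniform integrability in hand, the Dunford–Pettis theorem yields weak $L^1(\mathbb{R}_+,dz)$ relative compactness of $\{g^n(\cdot,t)\}$ for each $t$. To upgrade this to compactness in time I would prove time equicontinuity of $t\mapsto g^n(\cdot,t)$ in the weak topology, estimating $\int_{\mathbb{R}_+}|g^n(z,t)-g^n(z,s)|\,\phi(z)\,dz$ for test functions $\phi$ via the integrated form and the finiteness bounds of Definition \ref{def1}$(i)$. A variant of the Arzelà–Ascoli theorem adapted to the weak topology of $L^1$ then extracts a subsequence converging to a limit $g\in C([0,T];\mathcal{S}^+)$ in the weak sense. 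I would then pass to the limit term by term in the weak formulation $(ii)$ of Definition \ref{def1} and verify the integrability conditions $(i)$; since non-negativity and the first-moment norm are preserved under weak $L^1$ limits by lower semicontinuity, the bound $\|g(t)\|_{L^1(\mathbb{R}_+,zdz)}\le\|g_0\|_{L^1(\mathbb{R}_+,zdz)}$ and $g\in\mathcal{S}^+$ follow, and a diagonal argument extends the solution to $[0,\infty)$.

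The main obstacle is the passage to the limit in the quadratic terms $\mathcal{C}_1$, $\mathcal{B}_3$ and especially $\mathcal{B}_1$: the product of two weakly convergent sequences need not converge to the product of the weak limits, and $P$ carries an algebraic singularity at $z=0$. I expect the resolution to hinge on the sublinear growth of $\Phi$ from $(\Gamma_2)$ together with the moment and equi-integrability bounds, which force the bilinear integrands themselves to be uniformly integrable; combined with the weak convergence, this gives convergence of the bilinear forms by a product-of-weak-limits lemma of the kind standard in this theory (splitting each kernel into a bounded part, handled by weak convergence, and a tail controlled uniformly by the moment estimate). Handling the singular region $z\to 0$ in $\mathcal{B}_1$ through $(\Gamma_4)$ is the most delicate point of this step.
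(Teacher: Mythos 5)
Your plan is correct and reproduces the paper's overall architecture (Stewart-type truncation, uniform moment bounds, Dunford--Pettis, time equicontinuity, Arzel\`{a}--Ascoli, then term-by-term passage to the limit as in Lemmas \ref{compactness1}, \ref{4equicontinuiuty} and \ref{convergence lemma1}), but it departs from the paper at two points. First, cosmetically: you truncate with $\chi_{(0,n)}(z)\chi_{(0,n)}(z_1)$ and cut the gain terms, so the truncated mass is only non-increasing, whereas the paper uses the conservative cutoff $\Phi_n=\Phi\,\chi_{(0,n)}(z+z_1)$ of (\ref{cutoff kernel3}), for which the truncated problems conserve mass exactly, see (\ref{trunc mass1}); both versions yield the claimed inequality $\|g(t)\|_{L^1(\mathbb{R}_{+}, zdz)}\le\|g_0\|_{L^1(\mathbb{R}_{+}, zdz)}$. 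Second, and substantively: you obtain equi-integrability by de la Vall\'{e}e-Poussin, propagating $\int\Upsilon(g^n)\,dz$ by Gronwall, while the paper (Lemma \ref{compactness1} $(iii)$) runs Gronwall directly on the modulus $r^n(\delta,t)=\sup\{\int_U g^n dz:\ |U|\le\delta\}$. The direct route is what $(\Gamma_4)$ is tailor-made for: $\Omega_1(|U|)\to 0$ is a uniform-integrability statement about $P$ that transfers verbatim to $g^n$, the weight $z_1^{-\alpha}$ cancelling the kernel growth from $(\Gamma_2)$. Your route can be made to work, but it costs more than your sketch indicates: pushing $\int\Upsilon(g^n)$ through the gain term $\mathcal{B}_1^n$ requires an inequality of the form $P\,\Upsilon'(g^n)\le\Upsilon(P)+C\,\Upsilon(g^n)$, hence a single convex superlinear $\Upsilon$ satisfying simultaneously $\int\Upsilon(g_0)\,dz<\infty$, a $\Delta_2$-type condition, and growth restrictions making $\Upsilon$ composed with the singular profile of $P$ (the $z^{-\tau_2}$ bound of $(\Gamma_5)$ and the family controlled by $(\Gamma_4)$) integrable; such a $\Upsilon$ must be constructed, e.g.\ as a common convex minorant, which is exactly the work the paper's direct estimate avoids. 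What your version buys in exchange is a quantitative propagated Orlicz-type bound on $g^n$, which is stronger information than the bare equi-integrability the paper extracts.
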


\section{Existence of weak solutions}\label{existexistence1}
 In order to construct weak solutions to (\ref{ccecfe1})--(\ref{in1}), we follow a weak $L^1$ compactness method introduced in the classical work of Stewart \cite{Stewart:1989}.\\

To prove theorem \ref{existmain theorem1}, we first write (\ref{ccecfe1})--(\ref{in1}) into the limit of a sequence of truncated equations obtained by replacing the collision kernel $\Phi$ by their cut-off kernels $\Phi_n$  \cite{Stewart:1989}, where
\begin{eqnarray}\label{cutoff kernel3}
\Phi_n(z,z_1):=\Phi(z, z_1)\chi_{(0, n)}(z+z_1),
\end{eqnarray}

for $n\ge 1$ and $n \in \mathbb{N}$. Here $\chi_{A}$ denotes the characteristic function on a set $A$.
Considering $(\Gamma_1)$--$(\Gamma_5)$ and $g_0\in \mathcal{S}^+$, for each $n\ge 1$, we may employ the argument of the classical fixed point theory, as in \cite[Theorem 3.1]{Stewart:1989} or \cite{Walker:2002}, to show that
\begin{align}\label{trun cccecf}
\frac{\partial g^n}{\partial t}  = \mathcal{C}_1^n(g^n) - \mathcal{B}_3^n(g^n)+ \mathcal{B}_1^n(g^n), 
\end{align}
where
\begin{align*}
 & \mathcal{C}_{1}^n(g^n)(z,t):=\frac{1}{2}\int_{0}^{z} E(z-z_1, z_1 )  \Phi_n(z-z_1, z_1)g^n(z-z_1,t)g^n(z_1,t)dz_1,\\
 &\mathcal{B}_{3}^n(g^n)(z,t):=\int_{0}^{n-z}\Phi_n(z, z_1)g^n(z,t)g^n(z_1,t)dz_1,\\
 & \mathcal{B}_{1}^n(g^n)(z,t):=\frac{1}{2} \int_{z}^{n}\int_{0}^{z_1}  P(z|z_1-z_2;z_2)[1- E(z_1-z_2, z_2 )]\\
   &~~~~~~~~~~~~~~~~~~~~~~~~\times \Phi_n(z_1-z_2, z_2)g^n(z_1-z_2, t)g^n(z_2, t)dz_2dz_1,
\end{align*}
with the truncated initial data
\begin{equation}\label{trunc ccecfin1}
g^{n}_0(z):=g_0(z) \chi_{(0, n)}(z),
\end{equation}
has a unique non-negative solution $\hat{g}^n\in \mathcal{C}'([0, \infty);L^1((0,n), dz))$ s.t. the truncated version of mass conservation holds, i.e.
\begin{align}\label{trunc mass1}
\int_{0}^{n}z\hat{g}^n(z,t)dz=\int_{0}^{n}z\hat{g}^n_0(z)dz, \ \forall t \ge 0.
\end{align}

Now, we extend the truncated solution $\hat{g}^n$ by zero in $\mathbb{R}_{+} \times [0, T]$, as
\begin{equation}\label{trunc soln}
\hat{g}^{n}(z, t):=\begin{cases}
g^n(z, t),\ \ & \text{if}\ 0 < z< n, \\
\text{0},\ \ &  \text{if}\ z\geq n,
\end{cases}
\end{equation}
for $n\ge 1$ and $n \in \mathbb{N}$. For the no loss of generality, we drop $\hat{.}$ for the remaining part of the article.\\

Next, we wish to establish suitable bounds to apply Dunford-Pettis theorem [\cite{Edwards:1965}, Theorem 4.21.2] and then  equicontinuity of the sequence $(g^n)_{n\in \mathbb{N}}$ in time to use the \textit{Arzel\`{a}-Ascoli Theorem} \cite[Appendix A8.5]{Ash:1972}. This is the aim of the coming subsection.

\subsection{ Weak compactness}
\begin{lem}\label{compactness1}
Assume that $(\Gamma_1)$--$(\Gamma_5)$ hold and fix $T>0$. Let $g_0 \in \mathcal{S}^+$ and $g^n$ be a solution to (\ref{trun cccecf})--(\ref{trunc ccecfin1}). Then, the followings hold true: \\
$(i)$ there is a constant $\mathcal{G}(T)>0$ (depending on $T$) such that
\begin{align*}
\int_{0}^{n}(1+z)g^n(z,t)dz\leq \mathcal{G}(T)\ \ \text{for}\ \  n\ge 1\ \ \text{and all} \ \ t\in [0,T],
\end{align*}
$(ii)$ for any given $\epsilon> 0$, there exists $W_\epsilon>0$ (depending on $\epsilon$) such that, for all $t\in[0,T]$\\
\begin{align*}
\sup_{n\ge 1} \left\{ \int_{W_\epsilon}^{\infty}g^n(z,t)dz \right\}\leq \epsilon,
\end{align*}
$(iii)$ for a given $\epsilon > 0$, there exists  $\delta_\epsilon>0$ (depending on $\epsilon$) such that, for every measurable set $U$ of $\mathbb{R}_{+}$ with $|U|\leq \delta_\epsilon$, $n \ge 1$ and $t\in [0,T]$,\\
\begin{align*}
\int_{U}g^n(z, t)dz< \epsilon.
\end{align*}
\end{lem}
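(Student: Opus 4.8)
The three parts build on one another, and together they furnish exactly the boundedness, tightness and equi-integrability needed for the Dunford--Pettis theorem, so I would prove them in the stated order.

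\textbf{Part (i).} The first moment is free: the truncated mass identity \eqref{trunc mass1} together with $g_0^n\le g_0$ gives $\int_0^n z\,g^n(z,t)\,dz=\int_0^n z\,g_0\,dz\le\|g_0\|_{L^1(\mathbb R_+,zdz)}$ for all $t,n$. Only the number $M_0^n(t):=\int_0^n g^n\,dz$ can grow, since breakage creates particles. I would test \eqref{trun cccecf} against $\psi\equiv1$; after the standard symmetrisation and the substitutions $(z-z_1,z_1)\mapsto(z',z_1)$ in $\mathcal C_1^n$ and $(z_1-z_2,z_2)\mapsto(u,v)$ in $\mathcal B_1^n$, and using the number property \eqref{Total particles}, the right-hand side collapses to
\[ \frac{d}{dt}M_0^n(t)=\frac12\iint_{z+z_1<n}\big[(N-2)-E(z,z_1)(N-1)\big]\Phi_n(z,z_1)\,g^n(z,t)g^n(z_1,t)\,dz\,dz_1 . \]
On $(0,1)\times(0,1)$ the bracket is $\le0$ by $(\Gamma_3)$, so that region only helps; on the complement I bound the bracket by a constant $C_N$ and use $\Phi(z,z_1)\le2k_1(1+z)z_1$ for $z_1\ge1$ (valid because $z^\alpha\le1+z$ and $z_1^\beta\le z_1$ when $\alpha,\beta<1$, $z_1\ge1$) together with the mass bound to reach $\frac{d}{dt}M_0^n\le A\,M_0^n+B$, with $A,B$ depending only on $k_1,N,\alpha,\beta,\|g_0\|_{L^1(\mathbb R_+,zdz)}$. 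Gronwall on $[0,T]$ gives $M_0^n(t)\le\mathcal G_0(T)$, and adding the mass bound yields the claim with $\mathcal G(T)=\mathcal G_0(T)+\|g_0\|_{L^1(\mathbb R_+,zdz)}$.

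\textbf{Part (ii).} This is immediate from the mass bound via Markov's inequality: $\int_W^\infty g^n\,dz\le W^{-1}\int_0^\infty z\,g^n\,dz\le W^{-1}\|g_0\|_{L^1(\mathbb R_+,zdz)}$, so $W_\epsilon:=\|g_0\|_{L^1(\mathbb R_+,zdz)}/\epsilon$ works uniformly in $n$ and $t$.

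\textbf{Part (iii), the crux.} I would first use (ii) to reduce to bounded sets: given $\epsilon$, fix $W=W_{\epsilon/3}$, so that for $|U|\le\delta$ it suffices to bound $\int_V g^n$ with $V:=U\cap(0,W)$. Introduce $\eta_n(t):=\sup\{\int_V g^n(\cdot,t)\,dz:V\subset(0,W),\,|V|\le\delta\}$ and estimate $\frac{d}{dt}\int_V g^n\le\int_V\mathcal C_1^n+\int_V\mathcal B_1^n$, discarding the nonnegative loss term $\mathcal B_3^n$. In the coagulation gain every collision producing a particle in $V\subset(0,W)$ involves two particles of size $<W$, so $\Phi_n\le2k_1W^{\alpha+\beta}$; since $\{z':z'+z_1\in V\}\subset(0,W)$ has measure $\le\delta$, this gives $\int_V\mathcal C_1^n\le k_1W^{\alpha+\beta}M_0^n\,\eta_n(t)=:C_1\eta_n(t)$, the self-referential term that will drive a Gronwall argument. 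For the breakage gain I split on the colliding pair: if $u+v>W$, then $(\Gamma_5)$ gives $\int_V\chi_V P\,dz\le k(W)\int_V z^{-\tau_2}\,dz\le k(W)\delta^{1-\tau_2}/(1-\tau_2)$ while $\iint\Phi_n g^n g^n\le2k_1\mathcal G(T)^2$ by (i); if $u+v\le W$ and the daughter lies in $V\cap(0,1)$, then $(\Gamma_4)$ gives $\int\chi P\,dz\le\Omega_1(\delta)(u+v)^{-\alpha}$ and the singular weight is absorbed through $(u+v)^{-\alpha}\Phi_n(u,v)\le k_1(u^\beta+v^\beta)$, again bounded via (i); the remaining piece, daughters in $V\cap(1,W)$ from bounded collisions, is controlled by the local boundedness of $P$ away from the origin (as in the displayed example), contributing $O(\delta)$.

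Collecting the estimates, $\int_V g^n(t)\le\int_V g_0^n+\int_0^t\big[C_1\eta_n(s)+R(\delta)\big]\,ds$ with $R(\delta)\to0$ as $\delta\to0$; taking the supremum over admissible $V$ and applying Gronwall gives $\eta_n(t)\le(\eta_n(0)+T R(\delta))e^{C_1T}$. Since $g_0\in L^1$ is uniformly integrable, $\eta_n(0)\le\sup_{|V|\le\delta}\int_V g_0\to0$, so $\delta$ can be chosen independently of $n$ and $t$ to force $\eta_n(t)<2\epsilon/3$; with the tail bound from (ii) this yields $\int_U g^n<\epsilon$. The main obstacle is precisely this breakage gain: reconciling the admissible algebraic singularity of $P$ as $z\to0$ with the required smallness on sets of small measure, which is exactly what $(\Gamma_4)$ and $(\Gamma_5)$ are engineered to permit.
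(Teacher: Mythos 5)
Your overall strategy coincides with the paper's: part (i) via a zeroth-moment differential inequality, with $(\Gamma_3)$ giving the good sign on $(0,1)\times(0,1)$, $(\Gamma_2)$ plus conserved mass controlling the rest, and Gronwall; part (ii) via Markov's inequality and the mass bound; part (iii) via the supremum functional over small subsets of $(0,W)$, a differential inequality that discards the loss term, the translation-invariance trick for the coagulation gain (producing the self-referential Gronwall term), $(\Gamma_4)$/$(\Gamma_5)$ for the breakage gain, and absolute continuity of $\int g_0$ at $t=0$. Two of your variants are in fact cleaner than the paper's: in (i) you integrate over all of $(0,n)$ and get the exact identity $\frac{d}{dt}M_0^n=\frac12\iint\bigl[(N-2)-E(N-1)\bigr]\Phi_n g^n g^n\,dz\,dz_1$, whereas the paper integrates only over $(0,1)$ and goes through a lengthier case-splitting; and in (iii) you bound $\int_V z^{-\tau_2}dz\le\delta^{1-\tau_2}/(1-\tau_2)$ by rearrangement where the paper invokes H\"older with an auxiliary exponent $p$, $\tau_2 p<1$.

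The one genuine defect is in your breakage-gain estimate in (iii). You apply $(\Gamma_4)$ only to daughters in $V\cap(0,1)$, and for the leftover piece --- daughters in $V\cap(1,W)$ produced by collisions of total volume $u+v\le W$ --- you invoke ``local boundedness of $P$ away from the origin, as in the displayed example''. That is not one of the hypotheses $(\Gamma_1)$--$(\Gamma_5)$: the example happens to have this property, but nothing in the assumptions gives pointwise control of $P(z|u;v)$ for $1<z<u+v\le W$, and $(\Gamma_5)$ cannot be used there because no fixed threshold separates $z$ from $u+v$ when both range over $(1,W)$. So, as written, this step does not follow from the stated assumptions. The paper closes exactly this piece by applying $(\Gamma_4)$ to the whole set $U\subset(0,W)$; that is, it reads $(\Gamma_4)$ --- consistently with its prefix ``for each $W>0$'' and with the verification of the example, whose H\"older computation never uses $U\subset(0,1)$ --- as holding for arbitrary measurable subsets of $(0,W)$, not only of $(0,1)$. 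Under that reading your split at daughter size $1$ is unnecessary: apply $(\Gamma_4)$ to $V$ itself, absorb the singularity via $(u+v)^{-\alpha}\Phi(u,v)\le k_1(u^{\beta}+v^{\beta})$ exactly as you do, and your $O(\delta)$ claim becomes $\Omega_1(\delta)$ times a constant, which suffices. With that single repair your proof is complete and equivalent to the paper's.
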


\begin{proof}
$(i)$  Let $n\geq 1$ and $t\in [0,T]$, where $T>0$ is fixed. For $n=1$, the proof is trivial. Next, for $n>1$ and then taking integration of (\ref{trun cccecf}) from $0$ to $1$  with respect to $z$ and by using Leibniz's rule, we obtain
\begin{align}\label{Unibound1}
\frac{d}{dt} \int_0^1  g^n(z,t)dz = \int_0^1 \mathcal{C}_1^n(g^n)(z, t) dz - \int_0^1 \mathcal{B}_3^n(g^n)(z, t) dz + \int_0^1 \mathcal{B}_1^n(g^n)(z, t) dz.
\end{align}
The first term on the right-hand side of (\ref{Unibound1}) can be simplified by using Fubini's theorem and the transformation $z-z_1={z}'$ and $z_1=z_1'$ as
\begin{align}\label{Unibound2}
\int_0^1 \mathcal{C}_1^n(g^n)(z, t) dz 
 =&\frac{1}{2} \int_0^1\int_{0}^{1-z_1} E(z, z_1) \Phi_n(z, z_1)g^n(z,t)g^n(z_1,t)dzdz_1.
\end{align}
Using Fubini's theorem, the third term on the right-hand side of (\ref{Unibound1}) can be written as
\begin{align}\label{Unibound3}
\int_0^1 \mathcal{B}_1^n(g^n)(z, t) dz
 =&\frac{1}{2}  \int_0^1 \int_0^{z_1} \int_0^{z_1}  P(z|z_1-z_2; z_2) [1- E(z_1-z_2, z_2)]\nonumber\\
 &~~~~~~~~~\times \Phi_n(z_1-z_2, z_2) g^n(z_1-z_2, t)g^n(z_2, t)dz_2 dz dz_1\nonumber\\
&+\frac{1}{2}  \int_1^n \int_0^{1} \int_0^{z_1}  P(z|z_1-z_2; z_2) [1- E(z_1-z_2, z_2)]\nonumber\\
&~~~~~~~\times \Phi_n(z_1-z_2, z_2) g^n(z_1-z_2, t)g^n(z_2, t)dz_2 dz dz_1=:I_1^n+I_2^n.
\end{align}
Let us manipulate $I_1^n$, by changing the order of integrations, using (\ref{Total particles}) and applying the transformation $z_1-z_2=z_1'$ and $z_2=z_2'$, as
\begin{align*} 
I_1^n 
=&\frac{N}{2}  \int_0^1 \int_0^{z_1} [1- E(z_1-z_2, z_2)]\Phi_n(z_1-z_2, z_2) g^n(z_1-z_2, t)g^n(z_2, t) dz_2 dz_1\nonumber\\
= &\frac{N}{2}  \int_0^1 \int_0^{1-z} [1- E(z, z_1)] \Phi_n(z, z_1) g^n(z, t)g^n(z_1, t) dz_1 dz.
\end{align*}

Next, simplifying $I_2^n$, by using Fubini's theorem and (\ref{Total particles}), as
\begin{align}\label{Unibound33}
I_2^n 
\leq & \frac{1}{2}  \int_1^n  \int_0^{z_1} \int_0^{z_1} P(z|z_1-z_2; z_2) [1- E(z_1-z_2, z_2)]\nonumber\\
&~~~~~~~~~~~~~~~~~\times \Phi_n(z_1-z_2, z_2) g^n(z_1-z_2, t)g^n(z_2, t)dz dz_2 dz_1 \nonumber\\
= & \frac{N}{2}  \int_1^n  \int_0^{z}  [1- E(z-z_1, z_1)] \Phi_n(z-z_1, z_1) g^n(z-z_1, t)g^n(z_1, t) dz_1 dz.
\end{align}
Again using Fubini's theorem and applying transformation $z-z_1=z'$ and $z_1=z_1'$ into (\ref{Unibound33}), we get
\begin{align*}
I_2^n \leq   & \frac{N}{2}  \int_0^1  \int_1^{n}  [1- E(z-z_1, z_1)] \Phi_n(z-z_1, z_1) g^n(z-z_1, t)g^n(z_1, t) dz dz_1\nonumber\\
& + \frac{N}{2}  \int_1^n  \int_{z_1}^{n}  [1- E(z-z_1, z_1)] \Phi_n(z-z_1, z_1) g^n(z-z_1, t)g^n(z_1, t) dz dz_1\nonumber\\
 = & \frac{N}{2}  \int_0^1  \int_{1-z}^{n-z}  [1- E(z, z_1)] \Phi_n(z, z_1) g^n(z, t)g^n(z_1, t) dz_1 dz \nonumber\\
& + \frac{N}{2}  \int_1^n  \int_{0}^{n-z}  [1- E(z, z_1)] \Phi_n(z, z_1) g^n(z, t)g^n(z_1, t) dz_1 dz.
\end{align*}
Substituting the estimates on $I_1^n$ and $I_2^n$ into (\ref{Unibound3}), we evaluate
\begin{align}\label{Unibound35}
\int_0^1 \mathcal{B}_1^n(g^n)(z, t) dz
\leq &\frac{N}{2}  \int_0^1 \int_0^{1-z} [1- E(z, z_1)] \Phi_n(z, z_1) g^n(z, t)g^n(z_1, t) dz_1 dz\nonumber\\
 &+ \frac{N}{2}  \int_0^1  \int_{1-z}^{n-z}  [1- E(z, z_1)] \Phi_n(z, z_1) g^n(z, t)g^n(z_1, t) dz_1 dz \nonumber\\
& + \frac{N}{2}  \int_1^n  \int_{0}^{n} \Phi_n(z, z_1) g^n(z, t)g^n(z_1, t) dz_1 dz.
\end{align}

Inserting (\ref{Unibound2}) and (\ref{Unibound35}) into (\ref{Unibound1}), we obtain  
 \begin{align}\label{Unibound4}
\frac{d}{dt} \int_0^1  g^n(z,t)dz
\leq & - \int_0^1\int_{0}^{1-z} [1-\frac{1}{2}  E(z, z_1) -\frac{N}{2} (1- E(z, z_1)) ] \Phi_n(z, z_1)g^n(z,t)g^n(z_1,t)dz_1dz\nonumber\\
&- \int_0^1\int_{1-z}^{1} [1-\frac{N}{2} (1- E(z, z_1))]  \Phi_n(z, z_1)g^n(z,t)g^n(z_1,t)dz_1dz\nonumber\\
&- \int_0^1\int_{1}^{n-z} \Phi_n(z, z_1)g^n(z,t)g^n(z_1,t)dz_1dz\nonumber\\
&+ \frac{N}{2}  \int_0^1  \int_{1}^{n}   \Phi_n(z, z_1) g^n(z, t)g^n(z_1, t) dz_1 dz \nonumber\\
& + \frac{N}{2}  \int_1^n  \int_{0}^{n}   \Phi_n(z, z_1) g^n(z, t)g^n(z_1, t) dz_1 dz.
\end{align}
Applying $(\Gamma_3)$ to the first and the second integrals and then using the negativity of the first, second and third terms on the right-hand side of \eqref{Unibound4}, we have
\begin{align}\label{Unibound5}
\frac{d}{dt} \int_0^1  g^n(z,t)dz \leq & N  \int_0^1  \int_{1}^{n}  \Phi_n(z, z_1) g^n(z, t)g^n(z_1, t) dz_1 dz \nonumber\\
& + \frac{N}{2}  \int_1^n  \int_{1}^{n} \Phi_n(z, z_1) g^n(z, t)g^n(z_1, t) dz_1 dz.
\end{align}
Applying $(\Gamma_2)$, (\ref{trunc ccecfin1}) and $g_0\in \mathcal{S}^+$ to (\ref{Unibound5}), we obtain
\begin{align}\label{Unibound6}
\frac{d}{dt} \int_0^1  g^n(z,t)dz
\leq & Nk_1  \int_0^1  \int_{1}^{n}  [z^{\alpha} z_1 +z^{\beta} z_1] g^n(z, t)g^n(z_1, t) dz_1 dz \nonumber\\
& + Nk_1  \int_1^n  \int_{1}^{n}  z z_1 g^n(z, t)g^n(z_1, t) dz_1 dz\nonumber\\
\leq & 2Nk_1  \|g_0 \|_{L^1(\mathbb{R}_{+}, zdz)} \int_0^1   g^n(z, t) dz
 + Nk_1 \|g_0 \|_{L^1(\mathbb{R}_{+}, zdz)}^2.
\end{align}
Again, taking integration of (\ref{Unibound6}) from $0$ to $t$ with respect to time and then applying Gronwall's inequality, we have
\begin{align}\label{Unibound7}
\int_0^1 g^n(z,t)dz \leq \mathcal{G}_1(T),
\end{align}
where
\begin{align*}
\mathcal{G}_1(T):=\| g_0 \|_{L^1(\mathbb{R}_{+}, dz)} \bigg[ e^{2Nk_1T \| g_0 \|_{L^1(\mathbb{R}_{+}, zdz)}} -1 \bigg].
\end{align*}
Now, using (\ref{Unibound7}), (\ref{trunc mass1}) and (\ref{trunc ccecfin1}), estimate the following integral as
\begin{align*}
\int_0^n(1+z) g^n(z,t)dz =&\int_0^1 g^n(z,t)dz+\int_1^n g^n(z,t)dz+\int_0^n z g^n(z,t)dz\nonumber\\
\leq & \int_0^1 g^n(z,t)dz+2\| g_0 \|_{L^1(\mathbb{R}_{+}, zdz)} \leq \mathcal{G}(T),
\end{align*}
where $\mathcal{G}(T):=\mathcal{G}_1(T)+2 \| g_0\|_{L^1(\mathbb{R}_{+}, zdz)} $. This completes the proof of the first part of Lemma \ref{compactness1}.\\

$(ii)$ The second part of Lemma \ref{compactness1} can be easily proved in similar way as given in Giri et al. \cite{Giri:2010A}.\\

$(iii)$ Choose $ \epsilon > 0$ and let $U \subset \mathbb{R}_{+}$. Using Lemma \ref{compactness1} $(ii)$, we can choose $W \in(0, n)$ such that for all $n\in \mathbb{N}$  and $ t \in [0,T]$,
\begin{align}\label{comp3 2}
\int_{W}^{\infty}g^n(z,t)dz <  \frac {\epsilon} {2}.
\end{align}
 Fix $W>0$, for $n\ge 1$, $\delta_{\epsilon} \in (0, 1)$ and $t\in [0,T]$, we define
\begin{align*}
 r^n(\delta_{\epsilon},t):=\sup \left\{\int_{0}^{W} \chi_{U}(z)g^n(z,t)dz\ :\ U\subset (0,W) \ \ \text{and} \ \
|U|\leq \delta_{\epsilon} \right\}.
\end{align*}
For $n\ge 1$ and $t\in [0,T]$, it follows from the non-negativity of $g^n$, Fubini's theorem, (\ref{trun cccecf})--(\ref{trunc ccecfin1}) and nonnegativity of the second integral to (\ref{trun cccecf}) that

\begin{align}\label{equiintegrable1}
\int_{0}^{W}  \frac{\partial}{\partial t}  \chi_{U}(z) g^n(z,t)dz
\leq  & \int_{0}^{W} \chi_{U}(z)  \mathcal{C}_1^n(g^n)(z, t) dz+ \int_0^W \chi_{U}(z) \mathcal{B}_1^n(g^n)(z, t) dz\nonumber\\
 =: & J_1^n +J_2^n.
\end{align}
 Then by using $(\Gamma_3)$, Fubini's theorem and applying the transformation $z-z_1=z'$ and $z_1=z_1'$, $J_1^n$ can be estimated, similar to Giri et al. \cite{Giri:2012}, as
\begin{align*}
J_1^n \leq k_1 \mathcal{G}(T)(1+W)r^n (\delta_{\epsilon}, t).
\end{align*} Next, by applying Fubini's theorem twice, $(\Gamma_4)$, $(\Gamma_5)$ and H\"{o}lder's inequality for $p>1$ such that $p \tau_2< 1$, we estimate $J_2^n$ as \begin{align}\label{equiintegrable3}
J_2^n= &\frac{1}{2} \int_{0}^{W} \int_{0}^{z_1}\int_{0}^{z_1}\chi_{U}(z) P(z|z_1-z_2;z_2) [1-E(z_1-z_2, z_2) ]  \Phi_n(z_1-z_2, z_2)\nonumber\\
 &\ \ \ \ \ \ \ \ g^n(z_1-z_2, t)g^n(z_2, t)dz dz_2 dz_1 \nonumber\\
& + \frac{1}{2}\int_{W}^{n}  \int_{0}^{z_1} \int_{0}^{W} \chi_{U}(z) P(z|z_1-z_2;z_2) [1-E(z_1-z_2, z_2)]  \Phi_n(z_1-z_2, z_2)\nonumber\\
&\ \ \ \ \ \ \ \ g^n(z_1-z_2, t)g^n(z_2, t)dz dz_2 dz_1\nonumber\\
\leq  & \frac{1}{2} \Omega_1(|U|) \int_{0}^{W} \int_{0}^{z} z_1^{-\alpha}  \Phi_n(z-z_1, z_1)  g^n(z-z_1, t)g^n(z_1, t)dz_1 dz \nonumber\\
& + \frac{1}{2}   k(W)  {\delta_{\epsilon}}^{\frac{p-1}{p}  \left(  \frac{W^{1-\tau_2p}}   {{1-\tau_2p}} \right)^{\frac{1}{p}}  } \int_{W}^{n} \int_{0}^{z_1} \Phi_n(z_1-z_2, z_2)  g^n(z_1-z_2, t)g^n(z_2, t) dz_2 dz_1.
\end{align}

Again repeated application of Fubini's theorem, $(\Gamma_2)$, Lemma \ref{compactness1} $(i)$, $z-z_1=z'$ and $z_1=z_1'$, we have

\begin{align*}
J_2^n \leq  & \frac{1}{2} k_1 \Omega_1(|U|)  \int_{0}^{W} \int_{0}^{W-z} (z+z_1)^{-\alpha}  (z^{\alpha}z_1^{\beta}+z^{\beta} z_1^{\alpha})  g^n(z, t)g^n(z_1, t) dz_1 dz \nonumber\\
& + \frac{1}{2}  k_1 k(W) {\delta_{\epsilon}}^{\frac{p-1}{p}}  \left(  \frac{W^{1-\tau_2p}}   {{1-\tau_2p}} \right)^{\frac{1}{p}}    \int_{0}^{W} \int_{W-z}^{n-z} (z^{\alpha}z_1^{\beta}+z^{\beta} z_1^{\alpha})  g^n(z, t)g^n(z_1, t)dz_1 dz\nonumber\\
& + \frac{1}{2} k_1  k(W)  {\delta_{\epsilon}}^{\frac{p-1}{p}}  \left(  \frac{W^{1-\tau_2p}}   {{1-\tau_2p}} \right)^{\frac{1}{p}}  \int_{W}^{n} \int_{0}^{n-z} (z^{\alpha}z_1^{\beta}+z^{\beta} z_1^{\alpha})  g^n(z, t)g^n(z_1, t)dz_1 dz\nonumber\\
\leq & \frac{1}{2} k_1 \Omega_1(|U|)  \mathcal{G}(T)  \int_{0}^{W} (1+z^{\beta-\alpha} ) g^n(z, t) dz +2k_1  k(W)  {\delta_{\epsilon}}^{\frac{p-1}{p}}  \left(  \frac{W^{1-\tau_2p}}   {{1-\tau_2p}} \right)^{\frac{1}{p}}      \mathcal{G}(T)^2\nonumber\\
\leq & k_1 \Omega_1(|U|)  \mathcal{G}(T)^2 +2k_1  k(W)  {\delta_{\epsilon}}^{\frac{p-1}{p}}  \left(  \frac{W^{1-\tau_2p}}   {{1-\tau_2p}} \right)^{\frac{1}{p}} \mathcal{G}(T)^2.
\end{align*}


Gathering the above estimates on $J_{1}^n$, $J_{2}^n$ and inserting them into (\ref{equiintegrable1}), and applying Leibniz's rule, we obtain
\begin{align*}
\frac{d}{dt}\int_{0}^{W} \chi_{U}(z)g^n(z,t)dz\leq &k_1  \mathcal{G}(T) (1+W)r^n (\delta_{\epsilon} , t)+ k_1 \Omega_1(|U|)  \mathcal{G}(T)^2\\
& +2k_1  k(W)  {\delta_{\epsilon}}^{\frac{p-1}{p}}  \left(  \frac{W^{1-\tau_2p}}   {{1-\tau_2p}} \right)^{\frac{1}{p}}   \mathcal{G}(T)^2.
\end{align*}

Integrating the above inequality with respect to $t$ and taking supremum over all $U$ such that $U \subset (0,W)$ with $|U|$ $\leq \delta_{\epsilon}$, we estimate
\begin{align*}
r^n(\delta_{\epsilon},t) \leq & r^n(\delta_{\epsilon},0)+k_1  \mathcal{G}(T)(1+W)\int_0^t r^n (\delta_{\epsilon} , s)ds+ k_1 \Omega_1(|U|)  \mathcal{G}(T)^2T\\& +2k_1  k(W) T{\delta_{\epsilon}}^{\frac{p-1}{p}}  \left(  \frac{W^{1-\tau_2p}}   {{1-\tau_2p}} \right)^{\frac{1}{p}}    \mathcal{G}(T)^2,\ \ t\in [0,T].
\end{align*}
An application of Gronwall's inequality finally gives
 \begin{align*}
r^n(\delta_{\epsilon},t) \leq C^{*} (\delta_{\epsilon}, W) \exp(k_1 \mathcal{G}(T)T(1+W)),\ \ t\in [0,T],
\end{align*}
where
 \begin{align*}
 C^{*} (\delta_{\epsilon}, W):= & r^n(\delta_{\epsilon},0)+ k_1 \Omega_1(|U|)   \mathcal{G}(T)^2T+2k_1  k(W) T {\delta_{\epsilon}}^{\frac{p-1}{p}}  \left(  \frac{W^{1-\tau_2p}}   {{1-\tau_2p}} \right)^{\frac{1}{p}} \mathcal{G}(T)^2.
 \end{align*}

This shows that
\begin{align}\label{comp3 3}
\mbox{sup}_{n}\{ r^n(\delta_{\epsilon},t)\}\rightarrow 0~~ \mbox{as}~~ \delta_{\epsilon} \rightarrow 0.
\end{align}
Adding (\ref{comp3 2}) and (\ref{comp3 3}), we thus obtain the required result.
\end{proof}
Hence, from Dunford-Pettis theorem, we have $(g^{n})_{n \in \mathbb{N}}$ is a relatively compact subset of $\mathcal{S}^+$ for each $t \in [0, T]$.

\subsection{Equicontinuity with respect to time in weak sense}

By showing the following lemma, we check the time  equicontinuity in weak sense of the family $\{g^n(t), t\in[0,T]\}$ in $L^1(\mathbb{R}_{+}, dz)$.
\begin{lem}\label{4equicontinuiuty}
Let $\psi \in L^{\infty} (\mathbb{R}_{+})$. Then prove that
\begin{eqnarray*}
\lim_{h \to 0} \sup_{t \in [0, T-h]} \bigg| \int_0^{\infty}  [g^n(z, t+h) - g^n (z, t) ] \psi(z) dz \bigg| =0.
\end{eqnarray*}
\end{lem}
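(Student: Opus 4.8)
The plan is to work from the integral form of the truncated equation \eqref{trun cccecf}. Since $g^n$ solves \eqref{trun cccecf} and is continuously differentiable in time with values in $L^1((0,n),dz)$, for a.e. $z$ and every $0\le t\le t+h\le T$ one has
\[
g^n(z,t+h)-g^n(z,t)=\int_t^{t+h}\big[\mathcal{C}_1^n(g^n)-\mathcal{B}_3^n(g^n)+\mathcal{B}_1^n(g^n)\big](z,s)\,ds.
\]
Multiplying by $\psi$, integrating in $z$ over $(0,\infty)$ and using Fubini's theorem to exchange the $z$- and $s$-integrations, the problem reduces to bounding the three functionals $\int_0^\infty\psi(z)\,\mathcal{C}_1^n(g^n)(z,s)\,dz$, $\int_0^\infty\psi(z)\,\mathcal{B}_3^n(g^n)(z,s)\,dz$ and $\int_0^\infty\psi(z)\,\mathcal{B}_1^n(g^n)(z,s)\,dz$ by a constant depending only on $T$ and $\|\psi\|_{L^\infty}$, uniformly in $n\ge 1$ and $s\in[0,T]$. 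The key analytic inputs will be the weighted bound $\int_0^n(1+z)g^n(z,s)\,dz\le\mathcal{G}(T)$ from Lemma \ref{compactness1}$(i)$ and the elementary inequality $w^\gamma\le 1+w$ for $w\ge 0$, $0\le\gamma\le 1$, which together with $(\Gamma_2)$ gives $\Phi_n(z,z_1)\le\Phi(z,z_1)\le 2k_1(1+z)(1+z_1)$.

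For the coagulation and loss terms this is routine. First I would treat $\mathcal{C}_1^n$: after Fubini and the change of variables $z-z_1=z'$, $z_1=z_1'$ the functional becomes $\tfrac12\int_0^\infty\int_0^\infty\psi(z+z_1)E(z,z_1)\Phi_n(z,z_1)g^n(z,s)g^n(z_1,s)\,dz_1\,dz$, so bounding $|\psi|\le\|\psi\|_{L^\infty}$, $E\le 1$ and $\Phi_n\le 2k_1(1+z)(1+z_1)$ yields the estimate $k_1\|\psi\|_{L^\infty}\mathcal{G}(T)^2$. The loss functional $\int_0^\infty\psi\,\mathcal{B}_3^n\,dz$ is handled identically and is bounded by $2k_1\|\psi\|_{L^\infty}\mathcal{G}(T)^2$.

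The hard part will be the birth term $\mathcal{B}_1^n$, because the distribution function $P$ may be algebraically singular for small volumes. Here I would use Fubini to bring the $z$-integration innermost, so that for fixed colliding masses the inner integral is $\int_0^{z_1}\psi(z)P(z|z_1-z_2;z_2)\,dz$; since $P(z|z_1-z_2;z_2)$ vanishes for $z>z_1$ and integrates to $N$ over $(0,z_1)$ by the number-conservation identity \eqref{Total particles}, this inner integral is bounded by $N\|\psi\|_{L^\infty}$. This neatly absorbs the singularity of $P$ without ever pairing it against an unbounded weight, which is precisely why the moment assumptions are not needed at this step. After the transformation $z_1-z_2=z'$, $z_2=z_1'$ and the bounds $1-E\le 1$ and $\Phi_n\le 2k_1(1+z)(1+z_1)$, the remaining double integral is again controlled by $\mathcal{G}(T)^2$, giving a bound of the form $Nk_1\|\psi\|_{L^\infty}\mathcal{G}(T)^2$.

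Collecting the three estimates produces a constant $\Lambda(T)$ of order $(2+N)k_1\mathcal{G}(T)^2$ such that the $s$-integrand is bounded by $\Lambda(T)\|\psi\|_{L^\infty}$ uniformly in $n$ and $s\in[0,T]$. Consequently $\big|\int_0^\infty[g^n(z,t+h)-g^n(z,t)]\psi(z)\,dz\big|\le\Lambda(T)\|\psi\|_{L^\infty}\,h$ for every $t\in[0,T-h]$ and every $n\ge 1$; taking the supremum over $t$ and letting $h\to 0$ gives the assertion. Since the bound is independent of $n$, the convergence is uniform in $n$, which is exactly the equicontinuity in time needed to apply the Arzel\`{a}--Ascoli theorem to the family $(g^n)_n$ in conjunction with the weak compactness established in Lemma \ref{compactness1}.
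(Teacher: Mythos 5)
Your proof is correct, but it is not the route the paper takes, and the difference is worth recording. The paper does not integrate over all of $(0,\infty)$ at once: it first splits the integral at a cutoff $a$, chosen so large that the tail $\big|\int_a^\infty [g^n(z,t+h)-g^n(z,t)]\psi(z)\,dz\big|$ is controlled by $\frac{2}{a}\mathcal{G}(T)\|\psi\|_{L^\infty}<h/2$ via the mass bound of Lemma \ref{compactness1}$(i)$, and only on $(0,a)$ does it use the integral form of \eqref{trun cccecf}. Inside $(0,a)$ the birth term is then split again according to whether the total colliding mass $z_1$ is below or above $a$: the first piece is handled by the number-conservation identity \eqref{Total particles}, exactly as you do, but the second piece is handled instead with the singularity hypothesis $(\Gamma_5)$, which is why the paper's final constant carries the factor $k(a)\,a^{1-\tau_2}/(1-\tau_2)$. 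Your observation that Fubini plus \eqref{Total particles} alone bounds the \emph{entire} birth functional by $Nk_1\|\psi\|_{L^\infty}\mathcal{G}(T)^2$ (since $P(\cdot|z_1-z_2;z_2)\ge 0$ vanishes beyond $z_1$ and integrates to $N$) makes both the domain splitting and $(\Gamma_5)$ unnecessary in this lemma, and it buys something concrete: a genuine Lipschitz-in-time estimate $\big|\int_0^\infty[g^n(z,t+h)-g^n(z,t)]\psi(z)\,dz\big|\le \Lambda(T)\|\psi\|_{L^\infty}h$ with $\Lambda(T)$ independent of $n$, $t$ \emph{and} $h$. By contrast, in the paper the cutoff $a$ is chosen depending on $h$, so its final bound $k_1\|\psi\|_{L^\infty}\mathcal{G}(T)^2 h\,[3+N+k(a)a^{1-\tau_2}/(1-\tau_2)]+h/2$ has an $h$-dependent constant, and one must still argue that $h\,k(a(h))\,a(h)^{1-\tau_2}\to 0$ as $h\to 0$ — a point the paper leaves implicit and your argument sidesteps entirely. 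Two trivial remarks: your collected constant should be $(3+N)k_1$ rather than ``of order $(2+N)k_1$'' (coagulation $k_1$, loss $2k_1$, birth $Nk_1$), and the interchange of the $z$- and $s$-integrations should be justified term by term via Tonelli, each of the three integrands being nonnegative; neither affects the validity of the proof.
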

\begin{proof}
Let $\psi \in L^{\infty} (\mathbb{R}_{+})$, $h \in (0, T)$ with $h<1$ and $t\in [0, T-h]$. Choose $1<a<n$ such that $ \frac{2}{a} \mathcal{G}(T) \|\psi\|_{L^{\infty}( \mathbb{R}_{+})}   < h /2$. Next, consider the following integral, by using triangle inequality, as
 \begin{align}\label{4equicontinuiuty1}
\bigg|\int_0^{\infty}  [g^n(z, t+h) - g^n (z, t) ] \psi(z)dz \bigg| \leq & \bigg|\int_0^a  [g^n(z, t+h) - g^n (z, t) ]  \psi(z) dz \bigg|\nonumber\\      &+ \bigg|\int_a^{\infty}  [g^n(z, t+h) - g^n (z, t) ]  \psi(z) dz  \bigg|.
\end{align}

Using \eqref{trun cccecf}, the first integral on the right-hand side to (\ref{4equicontinuiuty1}) can be estimated as
\begin{align}\label{4equicontinuiuty2}
\bigg|\int_0^a    & [g^n(z, t+h) - g^n (z, t) ]  \psi(z) dz       \bigg| \nonumber\\
\leq &  \frac{1}{2} \|\psi\|_{L^{\infty}( \mathbb{R}_{+})}  \int_t^{t+h} \int_0^a \int_0^{z} E(z-z_1, z_1) \Phi_n(z-z_1, z_1) g^n(z-z_1, s)g^n(z_1, s)dz_1 dz ds\nonumber\\
& +  \|\psi\|_{L^{\infty}( \mathbb{R}_{+})}     \int_t^{t+h} \int_0^a \int_0^{n}  \Phi_n(z, z_1) g^n(z, s) g^n(z_1, s) dz_1 dz ds\nonumber\\
& +  \frac{1}{2} \|\psi\|_{L^{\infty}( \mathbb{R}_{+})}   \int_t^{t+h} \int_0^a  \int_z^{n} \int_0^{z_1} P(z|z_1-z_2; z_2) [1-E(z_1-z_2, z_2) ]\nonumber\\
&~~~~~~~~~~~~~~~~~~~~\times  \Phi_n(z_1-z_2, z_2) g^n(z_1-z_2, s) g^n(z_2, s)dz_2 dz_1 dz ds=:\sum_{i=3}^5J_i^n,
\end{align}
where $J_i^n$ represents the first, second and third terms, respectively, on the right-hand side to (\ref{4equicontinuiuty2}), for $i=3,4,5$.\\
Next, $J_3^n$ can be evaluated by applying Fubini's theorem, and using $z-z_1 =z'$ and $z_1 =z_1'$, $(\Gamma_2)$ and Lemma \ref{compactness1} $(i)$, we have
\begin{align*}
J_3^n \leq & k_1 \|\psi\|_{L^{\infty}( \mathbb{R}_{+})}  \int_t^{t+h} \bigg[\int_0^a  (1+z) g^n(z, s) dz \bigg]^2 ds
\leq  k_1 \|\psi\|_{L^{\infty}( \mathbb{R}_{+})} \mathcal{G}(T)^2 h.
\end{align*}
Similarly, $J_4^n$ can be estimated by using $(\Gamma_2)$ and Lemma \ref{compactness1} $(i)$, as
\begin{align*}
J_4^n \leq & 2 k_1 \|\psi\|_{L^{\infty}( \mathbb{R}_{+})}  \int_t^{t+h} \bigg[\int_0^n  (1+z) g^n(z, s) dz \bigg]^2 ds
\leq  2 k_1 \|\psi\|_{L^{\infty}( \mathbb{R}_{+})} \mathcal{G}(T)^2 h.
\end{align*}
Now, $J_5^n$ can be estimated by applying Fubini's theorem twice, and using (\ref{Total particles}) and  $(\Gamma_5)$, we obtain
\begin{align}\label{4equicontinuiuty3}
J_5^n \leq &  \frac{N}{2} \|\psi\|_{L^{\infty}( \mathbb{R}_{+})}   \int_t^{t+h} \int_0^a  \int_0^{z_1}
 \Phi_n(z_1-z_2, z_2) g^n(z_1-z_2, s) g^n(z_2, s)dz_2 dz_1  ds\nonumber\\
  & + \frac{1}{2} k(a) \frac{a^{1-\tau_2}}{1-\tau_2} \|\psi\|_{L^{\infty}( \mathbb{R}_{+})}   \int_t^{t+h}  \int_a^{n} \int_0^{z_1} \Phi_n(z_1-z_2, z_2) g^n(z_1-z_2, s) g^n(z_2, s)dz_2 dz_1  ds
\end{align}
Again, applying Fubini's theorem and using $z_1-z_2 =z_1'$ and $z_2 =z_2'$, $(\Gamma_2)$ and Lemma \ref{compactness1} $(i)$, we estimate (\ref{4equicontinuiuty3}) as
\begin{align*}
J_5^n \leq &  \frac{N}{2} \|\psi\|_{L^{\infty}( \mathbb{R}_{+})}   \int_t^{t+h} \int_0^a  \int_0^{a}
 \Phi_n(z_1, z_2) g^n(z_1, s) g^n(z_2, s)dz_2 dz_1  ds\nonumber\\
  & + \frac{1}{2} k(a) \frac{a^{1-\tau_2}}{1-\tau_2} \|\psi\|_{L^{\infty}( \mathbb{R}_{+})}   \int_t^{t+h}  \int_0^{n} \int_0^{n} \Phi_n(z_1, z_2) g^n(z_1, s) g^n(z_2, s)dz_2 dz_1  ds\nonumber\\
 \leq &  k_1 \|\psi\|_{L^{\infty}( \mathbb{R}_{+})} \mathcal{G}(T)^2h \bigg[N+  k(a)\frac{a^{1-\tau_2}}{1-\tau_2} \bigg].
\end{align*}
Inserting estimates on $J_3^n$, $J_4^n$ and $J_5^n$ into (\ref{4equicontinuiuty2}), we thus obtain
\begin{align}\label{4equicontinuiuty4}
\bigg|\int_0^a     [g^n(z, t+h) - g^n (z, t) ]  \psi(z) dz       \bigg|
\leq  k_1 \|\psi\|_{L^{\infty}( \mathbb{R}_{+})} \mathcal{G}(T)^2h \bigg[3+ N+  k(a)\frac{a^{1-\tau_2}}{1-\tau_2} \bigg].
\end{align}
Next, using Lemma \ref{compactness1} $(i)$, the last term on the right-hand side to (\ref{4equicontinuiuty1}) can be estimated as
\begin{align}\label{4equicontinuiuty5}
\bigg|\int_a^{\infty}    [g^n(z, t+h) - g^n (z, t) ]  \psi(z) dz       \bigg|
\leq & \|\psi\|_{L^{\infty}( \mathbb{R}_{+})}  \frac{1}{a} \int_a^{\infty}   z [g^n(z, t+h) + g^n (z, t)]  dz\nonumber\\
\leq & \|\psi\|_{L^{\infty}( \mathbb{R}_{+})}  \frac{2}{a} \mathcal{G}(T) <h/2.
\end{align}

Using (\ref{4equicontinuiuty4}) and (\ref{4equicontinuiuty5}) into (\ref{4equicontinuiuty1}), we have
\begin{align*}
\bigg|\int_0^{\infty} [g^n(z, t+h) - g^n (z, t)] \psi(z) dz \bigg| \leq & k_1 \|\psi\|_{L^{\infty}( \mathbb{R}_{+})} \mathcal{G}(T)^2h \bigg[3+ N+  k(a)\frac{a^{1-\tau_2}}{1-\tau_2} \bigg]  + \frac{h}{2},
\end{align*}
where $h$ is arbitrary. This completes the proof of Lemma \ref{4equicontinuiuty}.
\end{proof}

Then according to a refined version of the \textit{Arzel\`{a}-Ascoli theorem}, see \cite[Theorem 2.1]{Stewart:1989} or \textit{Arzel\`{a}-Ascoli theorem}, see Ash [\cite{Ash:1972}, page 228], we conclude that there exists a subsequence (${g^{n_k}}$) such that
\begin{align*}
\lim_{n_k\to\infty} \sup_{t\in [0,T]}  \left| \int_0^\infty  [ g^{n_k}(z,t) - g(z,t)]\ \psi(z)\ dz \right|  = 0, \label{vittel}
\end{align*}
for all $T>0$, $\psi \in L^\infty(\mathbb{R}_{+})$ and some $g \in \mathcal{C}_w([0,T]; L^1(\mathbb{R}_{+}, dz))$, where  $\mathcal{C}_w([0, T]; L^1 (\mathbb{R}_{+}, dz))$ is the space of all weakly continuous functions from $[0, T]$ to $L^1 (\mathbb{R}_{+},  dz)$. This implies that
\begin{equation}\label{equicontinuity f}
  g^{n_k}(t) \rightharpoonup g(t)\ \text{in}\  L^1(\mathbb{R}_{+}, dz) \ \text{as}\ n \to \infty,
\end{equation}
converges uniformly for $t \in [0,T]$ to some $g \in \mathcal{C}_w([0,T]; L^1(\mathbb{R}_{+}, dz))$.\\

Next, for any $ m>0$, $t\in [0,T], $ since we have $g^{n_k} \rightharpoonup g$, we obtain
\begin{align*}
\int_{0}^{m}zg(z,t)dz = \lim_{ n_k\rightarrow {\infty}} \int_{0}^{m} z g^{n_k}{(z,t)}dz \leq \|g_0\|_{L^1(\mathbb{R}_{+}, zdz)} < \infty.
\end{align*}
Using (\ref{trunc mass1}), the non-negativity of each $g^{n_k}$ and $g$, then as $ m\rightarrow \infty $ implies that $\|g( t)\|_{L^1(\mathbb{R}_{+}, zdz)} \le \|g_0\|_{L^1(\mathbb{R}_{+}, zdz)}$ and $g(t) \in \mathcal{S}^+$.

\subsection{Convergence of approximated integrals}\label{subs:limit}
Now, we prove that the limit function $g$ obtained in (\ref{equicontinuity f}) is indeed a weak solution to (\ref{ccecfe1})--(\ref{in1}).
We then have the following result:
\begin{lem}\label{convergence lemma1}
Let $(g^{n})_{n\in \mathbb{N}}$ be a bounded sequence in $\mathcal{S}^+$ and $g\in \mathcal{S}^+$, where $\|g^n\|_{L^1( \mathbb{R}_{+}, (1+z)dz)} \leq \mathcal{G}(T)$ and $g^n\rightharpoonup g$ in $L^1(\mathbb{R}_{+}, dz)$ as $n\to \infty $. Then, for each $a> 0$, we have
\begin{equation}\label{luchon}
 \mathcal{C}_{1}^n(g^n) \rightharpoonup  \mathcal{C}_{1}(g),\ \mathcal{B}_{3}^n(g^n)  \rightharpoonup \mathcal{ B}_{3}(g)\  \text{and}\ \ \mathcal{B}_{1}^n(g^n)  \rightharpoonup  \mathcal{B}_{1}(g)\   \text{in} \  L^1((0, a), dz)\  \text{as}\  n\to \infty.
\end{equation}
\end{lem}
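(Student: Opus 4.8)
The plan is to prove the three weak convergences in \eqref{luchon} by a single mechanism. For a fixed test function $\psi\in L^\infty((0,a))$ I would reduce each of $\int_0^a \psi\,\mathcal{C}_1^n(g^n)\,dz$, $\int_0^a \psi\,\mathcal{B}_3^n(g^n)\,dz$ and $\int_0^a \psi\,\mathcal{B}_1^n(g^n)\,dz$, via Fubini's theorem and the changes of variables already used in Lemma \ref{compactness1}, to a bilinear integral $\int\!\int \Theta^n(x,y)\,g^n(x)g^n(y)\,dx\,dy$, where $\Theta^n$ is an effective kernel built from $\psi$, $E$, $\Phi_n$ and (for $\mathcal{B}_1$) the distribution $P$. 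Since weak $L^1$ convergence is precisely convergence against every $\psi\in L^\infty$, it suffices to prove $\int\!\int\Theta^n g^ng^n \to \int\!\int\Theta\, g\,g$ for all such $\psi$; the membership $\mathcal{C}_1(g),\mathcal{B}_3(g),\mathcal{B}_1(g)\in L^1((0,a))$ follows from $g\in\mathcal{S}^+$ and the kernel bounds below.

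The core step is a bilinear passage to the limit for bounded kernels. Fix a square $Q=(0,W)^2$ and $\Theta\in L^\infty(Q)$ independent of $n$. Writing $g^n(x)g^n(y)-g(x)g(y)=g^n(x)[g^n(y)-g(y)]+[g^n(x)-g(x)]g(y)$, the second contribution equals $\int_0^W [g^n(x)-g(x)]\big(\int_0^W\Theta(x,y)g(y)\,dy\big)dx$, whose inner factor lies in $L^\infty$ since $g\in L^1$, so it vanishes by $g^n\rightharpoonup g$. For the first contribution set $v^n(x):=\int_0^W\Theta(x,y)[g^n(y)-g(y)]\,dy$; then $v^n\to0$ pointwise a.e. (again by $g^n\rightharpoonup g$, as $\Theta(x,\cdot)\in L^\infty$ for a.e. $x$) and $\sup_n\|v^n\|_{L^\infty}<\infty$ by Lemma \ref{compactness1}$(i)$. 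An Egorov argument combined with the equi-integrability of Lemma \ref{compactness1}$(iii)$ then forces $\int g^n v^n\to0$: off a set of arbitrarily small measure $v^n\to0$ uniformly, while on that small set the contribution is controlled uniformly in $n$. Hence $\int\!\int_Q \Theta\,g^ng^n\to\int\!\int_Q\Theta\,g\,g$.

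For $\mathcal{C}_1$ the reduction yields $\Theta^n(x,y)=\psi(x+y)E(x,y)\Phi_n(x,y)$ on $\{x+y<a\}$; since $0<\alpha\le\beta<1$, assumption $(\Gamma_2)$ makes $\Phi$ bounded there (no singularity at the origin), $E\le1$ and $\psi\in L^\infty$, so $\Theta^n$ is uniformly bounded and, by \eqref{cutoff kernel3}, equals the limiting kernel once $n>a$; the engine above then applies on $(0,a)^2$. For $\mathcal{B}_3$ the inner integral runs to $z_1=n-z$, so I would split it at a level $W$: on $(0,W)$ the kernel $\psi(z)\Phi_n(z,z_1)$ is bounded and the engine applies, while on $(W,n-z)$ I bound $\int_W^\infty\Phi(z,z_1)g^n(z_1)\,dz_1$ using $(\Gamma_2)$ and $z_1^\beta\le W^{\beta-1}z_1$ for $z_1\ge W$ (here $\beta<1$ is essential) together with Lemma \ref{compactness1}$(i)$, making this tail $O(W^{\beta-1})$ uniformly in $n$; it is absorbed by sending $W\to\infty$ after $n\to\infty$.

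The main obstacle is $\mathcal{B}_1$, where the triple integral, the algebraic singularity of $P$ and the unbounded $z_1$-range interact. After Fubini and the substitution $z_1-z_2=x,\ z_2=y$, the $z$-integration produces the effective kernel $\theta^n(x,y)=\big(\int_0^{\min(a,x+y)}\psi(z)P(z|x;y)\,dz\big)[1-E(x,y)]\Phi_n(x,y)$. The delicate point is showing the bracketed weight is uniformly bounded: when $x+y\le W$ this follows from the total-number identity \eqref{Total particles}, giving a bound by $N\|\psi\|_{L^\infty}$, and when $x+y>W$ it follows from $(\Gamma_5)$ with $\tau_2<1$, giving a bound by $k(W)\tfrac{a^{1-\tau_2}}{1-\tau_2}\|\psi\|_{L^\infty}$, while $(\Gamma_4)$ supplies the refined control near the singularity. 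With the weight controlled, $\theta^n$ inherits the polynomial growth of $\Phi$, and I would again split the $(x,y)$-domain into the bounded square $(0,W)^2$, where the engine applies, and its complement, where tightness (Lemma \ref{compactness1}$(ii)$) and $\beta<1$ render the contribution uniformly small. Sending $n\to\infty$ and then $W\to\infty$ completes the identification $\mathcal{B}_1^n(g^n)\rightharpoonup\mathcal{B}_1(g)$. I expect the bookkeeping of this effective kernel for $\mathcal{B}_1$, namely securing its uniform boundedness despite the singular $P$, to be the most technical part of the argument.
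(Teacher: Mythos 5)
Your proposal is correct, and its overall strategy coincides with the paper's: reduce each term by Fubini's theorem and the substitution $z_1-z_2=x$, $z_2=y$ to a bilinear form in $g^n$, bound the effective kernel uniformly using \eqref{Total particles} where the colliding masses are small and $(\Gamma_5)$ where they are large, and split the domain into a bounded square plus a far-field tail that is small uniformly in $n$ because $\beta<1$ (the paper fixes the cut level $b$ a priori through \eqref{conv21}, whereas you send $W\to\infty$ after $n\to\infty$; this is equivalent bookkeeping). Where you genuinely diverge is in the core bilinear limit. The paper decomposes $g^ng^n-gg$ into the terms $H_1^n,\dots,H_4^n$, shows for each that the factor multiplying $g^n-g$ is bounded in $L^\infty$ uniformly in $n$, and then concludes ``by weak convergence''; but in $H_1^n$ and $H_2^n$ that factor itself contains $g^n$ and hence depends on $n$, so weak convergence against a fixed $L^\infty$ test function does not literally apply, and the paper leaves the repair implicit (it is the classical argument of Stewart \cite{Stewart:1989}). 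Your ``engine'' --- $v^n\to 0$ pointwise a.e., $\sup_n\|v^n\|_{L^\infty}<\infty$, Egorov's theorem on the finite-measure square, and the equi-integrability of Lemma \ref{compactness1} $(iii)$ to control the exceptional set uniformly in $n$ --- is precisely the missing mechanism, stated once and applied uniformly to all three convergences. So your route is more self-contained and more rigorous at the one delicate analytic step; what the paper's term-by-term decomposition buys instead is explicit tracking of which hypothesis ($(\Gamma_2)$, $(\Gamma_5)$, \eqref{Total particles}) enters in which region of the $(z_1,z_2)$-domain, and economy for $\mathcal{C}_1$ and $\mathcal{B}_3$, which it dispatches by citing \cite{Giri:2010, Giri:2012, Stewart:1989} rather than by your direct tail estimate.
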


\begin{proof} Let $0<a<n$, where $ z\in (0, a]$. Suppose $\psi \in L^\infty(0, a)$.
It can easily be shown with slide modifications, as in \cite{Giri:2010, Giri:2012, Stewart:1989}, that $\mathcal{C}_{1}^n(g^n) \rightharpoonup  \mathcal{C}_{1}(g)$ and $\mathcal{B}_{3}^n(g^n)  \rightharpoonup  \mathcal{B}_{3}(g)$.\\

  In order to show that $\mathcal{B}_{1}^n(g^n)  \rightharpoonup  \mathcal{B}_{1}(g)\   \text{in} \  L^1((0, a), dz)\  \text{as}\  n\to \infty$, it is sufficient to prove the following integral tends to zero as $n \to \infty$.
\begin{align}\label{passintegrals}
\bigg| \int_{0}^{a}  \psi(z) &[ \mathcal{B}_1(g^n)(z, t)-\mathcal{B}_1(g)(z,t)] dz\bigg|\nonumber\\
 \le & \frac{1}{2} \bigg|\int_{0}^{a} \int_{z}^{b}\int_{0}^{z_1} \psi(z) P(z|z_1-z_2;z_2)[1-E(z_1-z_2,z_2)] \Phi(z_1-z_2,z_2)\nonumber\\
 & ~~~~~~~~\times [g^n(z_1-z_2,t)g^n(z_2,t)-g(z_1-z_2,t)g(z_2,t)]dz_2dz_1 dz \bigg| \nonumber\\
 &+ \frac{1}{2} \bigg|\int_{0}^{a} \int_{b}^{\infty}\int_{0}^{z_1} \psi(z) P(z|z_1-z_2;z_2)[1-E(z_1-z_2,z_2)] \Phi(z_1-z_2,z_2)\nonumber\\
&~~~~~~~~\times [g^n(z_1-z_2,t)g^n(z_2,t)-g(z_1-z_2,t)g(z_2,t)]dz_2dz_1 dz\bigg|,
 \end{align}

 where we choose $b$ with $n>b> a$ large enough for a given $\epsilon >0$, such that
\begin{align}\label{conv21}
\frac{2k_2 k(a) a^{1-\tau_2}} {1-\tau_2} \| \psi \|_{L^{\infty}(0, a)}[  \mathcal{G}(T)^2+\|g\|^2_{L^1(\mathbb{R}_{+}, (1+z)dz)} ](1+b)^{\beta -1}< \frac{\epsilon}{2}.
\end{align}
Now, let us simplify the following integral by applying Fubini's theorem, as
\begin{align*}
\frac{1}{2} \bigg| \int_0^a &\int_z^b \int_0^{z_1}    \psi(z)P(z|z_1-z_2; z_2) [1-E(z_1-z_2, z_2)] \Phi(z_1-z_2, z_2)\nonumber\\
 &    \times [g^n(z_1-z_2, t)g^n(z_2, t)-g(z_1-z_2, t)g(z_2, t)]dz_2dz_1dz \bigg|  \nonumber\\
 \le & \bigg| \frac{1}{2}   \int_0^a \int_0^{z_1} \int_0^{z_1}   \psi(z) P(z|z_1-z_2; z_2) [1-E(z_1-z_2, z_2)] \Phi(z_1-z_2, z_2) \nonumber\\
 &    \times   g^n(z_1-z_2, t)   [g^n(z_2, t)-g(z_2, t)] dz_2dzdz_1 \bigg| \nonumber\\
 & + \bigg| \frac{1}{2} \bigg|   \int_a^b \int_0^a \int_0^{z_1}   \psi(z) P(z|z_1-z_2; z_2) [1-E(z_1-z_2, z_2)] \Phi(z_1-z_2, z_2) \nonumber\\
 &    \times   g^n(z_1-z_2, t)   [g^n(z_2, t)-g(z_2, t)] dz_2dzdz_1 \bigg| \nonumber
 \end{align*}
 \begin{align}\label{conv27}
  &+  \bigg| \frac{1}{2}    \int_0^a \int_0^{z_1} \int_0^{z_1}  \psi(z) P(z|z_1-z_2; z_2) [1-E(z_1-z_2, z_2)] \Phi(z_1-z_2, z_2)\nonumber\\
 &    \times  g(z_2, t)[g^n(z_1-z_2, t)-g(z_1-z_2, t)] dz_2dzdz_1    \bigg| \nonumber\\
  &+  \bigg| \frac{1}{2}   \int_a^b \int_0^a \int_0^{z_1}   \psi(z) P(z|z_1-z_2; z_2) [1-E(z_1-z_2, z_2)] \Phi(z_1-z_2, z_2)\nonumber\\
 &    \times  g(z_2, t)[g^n(z_1-z_2, t)-g(z_1-z_2, t)] dz_2dzdz_1    \bigg|  =: \sum_{i=1}^4   H_i^n,
\end{align}
where $H_i^n$, for $i=1,2,3,4$, are preceding integrals on the right-hand side to \eqref{conv27}. Each $H_i^n$, for $i=1,2,3,4$ is evaluated individually. Let us first estimate $H_1^n$, by using the repeated applications of Fubini's theorem and the transformation $z_1-z_2=z_1'$ and $z_2=z_2'$, as
\begin{align}\label{H1}
H_1^n 
  = &  \frac{1}{2} \bigg| \int_0^a \int_{0}^{a-z_2} \int_0^{z_1+z_2}   \psi(z) P(z|z_1; z_2) \Phi(z_1, z_2) [1-E(z_1, z_2)]
  g^n(z_1, t)\nonumber\\
  &~~~~~~~\times [g^n(z_2, t)-g(z_2, t)] dz dz_1 dz_2 \bigg|.
\end{align}

 Next, the following integral can be estimated, by using Lemma \ref{compactness1} $(i)$, \eqref{Total particles} and $(\Gamma_2)$, for each $z_2 \in (0, a)$, as
\begin{align}\label{H111}
 \frac{1}{2}  \int_{0}^{a-z_2} \int_0^{z_1+z_2} &  \psi(z) P(z|z_1; z_2) \Phi(z_1, z_2) [1-E(z_1, z_2)]
  g^n(z_1, t)  dz dz_1 \nonumber\\
 \leq &  N k_1 \|\psi \|_{L^{\infty}(0, a)}  \int_{0}^{a-z_1} (1+z_1) (1+z_2) g^n(z_1, t) dz_1 \nonumber\\
  \leq &  N k_1 \|\psi \|_{L^{\infty}(0, a)} \mathcal{G}(T) (1+z_2) \in  L^{\infty}(0, a).
\end{align}

From (\ref{H111}) and weak convergence of $g^n \rightharpoonup g$ in $L^1(\mathbb{R}_+, dz)$, we obtain
\begin{align}\label{H1111}
H_1^n \to 0,\ \ \ \text{as}\ \ n \to \infty.
\end{align}

Further, $H_2^n$ can be simplified by using Fubini's theorem, and  the transformation $z_1-z_2=z_1'$ and $z_2=z_2'$, as
\begin{align}\label{H2}
H_2^n 
  = &  \frac{1}{2} \bigg| \int_0^a  \int_{a-z_2}^{b-z_2} \int_0^{a}   \psi(z) P(z|z_1; z_2) [1-E(z_1, z_2)] \Phi(z_1, z_2)  g^n(z_1, t)   [g^n(z_2, t)-g(z_2, t)]dzdz_1  dz_2  \nonumber\\
   & + \frac{1}{2} \int_a^b  \int_0^{b-z_2} \int_0^{a}   \psi(z) P(z|z_1; z_2) [1-E(z_1, z_2)] \Phi(z_1, z_2) \nonumber\\
 &    \times   g^n(z_1, t)   [g^n(z_2, t)-g(z_2, t)] dzdz_1  dz_2 \bigg|.
\end{align}
Next, we evaluate the following integral by using  $(\Gamma_5)$,  $(\Gamma_2)$ and Lemma \ref{compactness1} $(i)$, as
\begin{align}\label{H21}
 \frac{1}{2} \int_{a-z_2}^{b-z_2}  \int_0^{a} &  \psi(z) P(z|z_1; z_2) [1-E(z_1, z_2)] \Phi(z_1, z_2)g^n(z_1, t)dzdz_1  \nonumber\\
 \leq &  \|\psi\|_{L^{\infty} (0, a)} k(a) \frac{ a^{1-\tau_2}}{1-\tau_2} k_1  \int_0^b   (1+z_1)(1+z_2) g^n(z_1, t)  dz_1  \nonumber\\
 \leq &  \|\psi\|_{L^{\infty} (0, a)} k(a) \frac{ a^{1-\tau_2}}{1-\tau_2} k_1 (1+z_2) \mathcal{G}(T) \in L^{\infty}(0, a)\ \text{for each }\ z_2\in (0, a).
\end{align}
Similarly, by using  $(\Gamma_5)$,  $(\Gamma_2)$ and Lemma \ref{compactness1} $(i)$, we estimate the following term, as
\begin{align}\label{H22}
 \frac{1}{2}  \int_0^{b-z_2} \int_0^{a} &  \psi(z) P(z|z_1; z_2) [1-E(z_1, z_2)] \Phi(z_1, z_2)  g^n(z_1, t)  dzdz_1  \nonumber\\
 \leq &  \|\psi\|_{L^{\infty} (0, a)} k(a) \frac{ a^{1-\tau_2}}{1-\tau_2} k_1  \int_0^a   (1+z_1)(1+z_2) g^n(z_1, t)  dz_1  \nonumber\\
 \leq &  \|\psi\|_{L^{\infty} (0, a)} k(a) \frac{ a^{1-\tau_2}}{1-\tau_2} k_1 (1+z_2) \mathcal{G}(T) \in L^{\infty}(0, b)\ \text{for each }\ z_2\in (a, b).
\end{align}
From (\ref{H21}), (\ref{H22}), (\ref{H2}) and the weak convergence $g^n$ to $g$ in $L^1(\mathbb{R}_+, dz)$, we obtain
\begin{align}\label{H23}
H_2^n \to 0,\ \ \ \text{as}\ \ n \to \infty.
\end{align}

Let us now consider $H_3^n$, after implementing Fubini's theorem twice,  $z_1-z_2=z_1'$ $ \& $ $z_2=z_2'$ and interchanging $z_1 ~\& ~ z_2$, as
\begin{align}\label{H3}
H_3^n 
= &  \frac{1}{2} \bigg| \int_0^a \int_0^{a-z_1} \int_0^{z_1+z_2}  \psi(z) P(z|z_1; z_2) [1-E(z_1, z_2)]\nonumber\\
 &~~~~~~~~\times \Phi(z_1, z_2)  g(z_2, t)[g^n(z_1, t)-g(z_1, t)] dz dz_2 dz_1 \bigg|.
\end{align}

Again, using (\ref{Total particles}) and $(\Gamma_2)$, we evaluate the following integral, as
\begin{align}\label{H33}
\frac{1}{2} \int_0^{a-z_1} \int_0^{z_1+z_2} &  \psi(z) P(z|z_1; z_2) [1-E(z_1, z_2)] \Phi(z_1, z_2)  g(z_2, t) dz dz_2\nonumber\\
\leq &  k_1 \|\psi\|_{L^{\infty}(0, a)} N \int_0^a (1+z_1)(1+z_2) g(z_2, t) dz_2\nonumber\\
    \leq &  k_1 \|\psi\|_{L^{\infty}(0, a)}  N (1+z_1) \|g\|_{L^1(\mathbb{R}_+, (1+z)dz)}  \in L^{\infty}(0, a).
\end{align}

Estimate (\ref{H33}) and the weak convergence of $g^n$ to $g$ in $L^1(\mathbb{R}_+, dz)$ confirms that
\begin{align}\label{H333}
H_3^n \to 0,\ \ \ \text{as}\ \ n \to \infty.
\end{align}

Finally, simplifying $H_4^n$ by using repeated applications of Fubini's theorem, and  $z_1-z_2=z_1'$ $\& $ $z_2=z_2'$, as
\begin{align}\label{H4}
H_4^n = &  \frac{1}{2} \bigg| \int_a^b  \int_0^{z_1} \int_0^{a}   \psi(z) P(z|z_1-z_2; z_2) [1-E(z_1-z_2, z_2)] \Phi(z_1-z_2, z_2) \nonumber\\
 &    \times   g(z_2, t)[g^n(z_1-z_2, t)-g(z_1-z_2, t)] dz dz_2 dz_1  \bigg|    \nonumber\\
  \le &  \frac{1}{2} \bigg| \int_0^b  \int_{0}^{b-z_1} \int_0^{a}   \psi(z) P(z|z_1; z_2) [1-E(z_1, z_2)] \Phi(z_1, z_2) \nonumber\\
 & ~~~~~~   \times   g(z_2, t)[g^n(z_1, t)-g(z_1, t)] dz dz_2 dz_1 \nonumber\\
   & + \frac{1}{2} \int_0^a  \int_0^{a-z_1} \int_0^{a}   \psi(z) P(z|z_1; z_2) [1-E(z_1, z_2)] \Phi(z_1, z_2) \nonumber\\
 &  ~~~~~~  \times   g(z_2, t)[g^n(z_1, t)-g(z_1, t)] dz dz_2 dz_1 \bigg|.
\end{align}

 Similar to $H_2^n$, by using $(\Gamma_5)$,  $(\Gamma_2)$, Lemma \ref{compactness1} $(i)$, \eqref{H4} and  weak convergence $g^n$ to $g$ in $L^1(\mathbb{R}_+, dz)$, it can be easily shown that
\begin{align}\label{H43}
H_4^n \to 0,\ \ \ \text{as}\ \ n \to \infty.
\end{align}

Inserting \eqref{H1111}, \eqref{H23}, \eqref{H333} and \eqref{H43} into (\ref{conv27}), we get
\begin{align}\label{conv28}
\frac{1}{2} \bigg| \int_0^a &\int_z^b \int_0^{z_1}    \psi(z)P(z|z_1-z_2; z_2) [1-E(z_1-z_2, z_2)] \Phi(z_1-z_2, z_2)\nonumber\\
 &    \times [g^n(z_1-z_2, t)g^n(z_2, t)-g(z_1-z_2, t)g(z_2, t)]dz_2dz_1dz \bigg| \to 0\ \ n \to \infty.
\end{align}

 Now, we consider the last term on the right-hand side of \eqref{passintegrals}. By using repeated applications of Fubini's theorem and the transformation $z_1-z_2=z_1'$ and $z_2=z_2'$, we have
\begin{align}\label{conv220}
\frac{1}{2} \bigg| \int_{0}^{a} & \int_{b}^{\infty}\int_{0}^{z_1} \psi(z)P(z|z_1-z_2; z_2) [1-E(z_1-z_2, z_2)] \Phi(z_1-z_2, z_2)\nonumber\\
 & ~~~~~~~~\times [g^n(z_1-z_2, t)g^n(z_2, t)-g(z_1-z_2, t)g(z_2, t)]dz_2dz_1dz \bigg| \nonumber\\
 = &\frac{1}{2} \bigg| \int_{0}^{a}  \int_{0}^{b}\int_{b}^{\infty} \psi(z)P(z|z_1-z_2; z_2) [1-E(z_1-z_2, z_2)] \Phi(z_1-z_2, z_2)\nonumber\\
 & ~~~~~~~~\times [g^n(z_1-z_2, t)g^n(z_2, t)-g(z_1-z_2, t)g(z_2, t)]dz_1 dz_2 dz \nonumber\\
 &+\frac{1}{2}  \int_{0}^{a}  \int_{b}^{\infty}\int_{z_2}^{\infty} \psi(z)P(z|z_1-z_2; z_2) [1-E(z_1-z_2, z_2)] \Phi(z_1-z_2, z_2)\nonumber\\
 & ~~~~~~~~\times [g^n(z_1-z_2, t)g^n(z_2, t)-g(z_1-z_2, t)g(z_2, t)]dz_1 dz_2 dz \bigg| \nonumber\\
 \le & \frac{1}{2} \bigg|  \int_{0}^{b}\int_{b-z_2}^{n} \int_{0}^{a} +         \psi(z)P(z|z_1; z_2) [1-E(z_1,z_2)] \Phi(z_1, z_2)\nonumber\\
 & ~~~~~~~~\times [g^n(z_1, t)g^n(z_2, t)-g(z_1, t)g(z_2, t)]dz  dz_1 dz_2 \bigg| \nonumber\\
 & + \frac{1}{2} \bigg|   \int_{b}^{\infty}\int_{0}^{n} \int_{0}^{a}         \psi(z)P(z|z_1; z_2) [1-E(z_1,z_2)] \Phi(z_1, z_2)\nonumber\\
 & ~~~~~~~~\times [g^n(z_1, t)g^n(z_2, t)-g(z_1, t)g(z_2, t)]dz  dz_1 dz_2 \bigg| \nonumber\\
 &+ \frac{1}{2} \bigg| \int_{0}^{\infty}\int_{n}^{\infty} \int_{0}^{a}          \psi(z)P(z|z_1; z_2) [1-E(z_1,z_2)] \Phi(z_1, z_2)\nonumber\\
 & ~~~~~~~~\times [g^n(z_1, t)g^n(z_2, t)-g(z_1, t)g(z_2, t)]dz  dz_1 dz_2 \bigg|.
\end{align}
 Let us first simplify the first integral on the right-hand side of \eqref{conv220} by applying Fubini's theorem, as
\begin{align}\label{conv2201}
\frac{1}{2} \bigg| \int_{0}^{b} & \int_{b-z_2}^{n} \int_{0}^{a}  \psi(z)P(z|z_1; z_2) [1-E(z_1, z_2)] \Phi(z_1, z_2)\nonumber\\
 & ~~\times [g^n(z_1, t)g^n(z_2, t)-g(z_1, t)g(z_2, t)]dz dz_1 dz_2 \bigg| \nonumber\\
 \le & \frac{1}{2} \bigg| \bigg\{  \int_{0}^{b}\int_{b-z_1}^{b} \int_{0}^{a} +\int_{b}^{n}\int_{0}^{b} \int_{0}^{a} \bigg\} \psi(z)P(z|z_1; z_2) [1-E(z_1, z_2)] \Phi(z_1, z_2)\nonumber\\
 & ~~\times  g(z_2, t)[g^n(z_1, t)-g(z_1, t)]  dz dz_2 dz_1  \bigg| \nonumber\\
 & + \frac{1}{2} \bigg| \bigg\{  \int_{0}^{b}\int_{b-z_2}^{b} \int_{0}^{a} +\int_{0}^{b}\int_{b}^{n} \int_{0}^{a} \bigg\} \psi(z)P(z|z_1; z_2) [1-E(z_1, z_2)] \Phi(z_1, z_2)\nonumber\\
 & ~~\times g^n(z_1, t)[g^n(z_2, t)-g(z_2, t)] dz dz_1 dz_2  \bigg|
\end{align}
Similar to previous cases,  by using $(\Gamma_5)$,  $(\Gamma_2)$, Lemma \ref{compactness1} $(i)$ and  weak convergence $g^n$ to $g$ in $L^1(\mathbb{R}_{+})$, it can be easily shown that
\begin{align}\label{conv2202}
\frac{1}{2} \bigg| \int_{0}^{b} & \int_{b-z_2}^{n} \int_{0}^{a}  \psi(z)P(z|z_1; z_2) [1-E(z_1, z_2)] \Phi(z_1, z_2)\nonumber\\
 & ~~\times [g^n(z_1, t)g^n(z_2, t)-g(z_1, t)g(z_2, t)]dz dz_1 dz_2 \bigg| \to 0\ \ n\to \infty.
 \end{align}
Next, the second integral on the right-hand side of \eqref{conv220} is estimated, by using $(\Gamma_2)$, $(\Gamma_5)$, (\ref{conv21}) and Lemma \ref{compactness1} $(i)$, as
\begin{align}\label{conv22}
\frac{1}{2} \bigg|  \int_{b}^{\infty} & \int_{0}^{n} \int_{0}^{a}  \psi(z)P(z|z_1; z_2) [1-E(z_1,z_2)] \Phi(z_1, z_2)\nonumber\\
 & ~~~~~~~~\times [g^n(z_1, t)g^n(z_2, t)-g(z_1, t)g(z_2, t)]dz  dz_1 dz_2 \bigg| \nonumber\\
\leq &   \frac{1}{2} k(a) \| \psi \|_{L^{\infty}(0, a)}   \int_{b}^{\infty}  \int_{0}^{n} \int_{0}^{a}   z^{-\tau_2} \Phi(z_1, z_2)[g^n(z_1, t)g^n(z_2, t)+ g(z_1, t)g(z_2, t)]dz dz_1dz_2  \nonumber\\
\leq & k_1 k(a) \| \psi \|_{L^{\infty}(0, a)} \int_{b}^{\infty}\int_{0}^{a} z^{-\tau_2} \frac{(1+z_2)}{(1+z_2)^{1-\beta }} \bigg[g^n(z_2,t) \mathcal{G}(T)+g(z_2,t)\|g\|_{L^1(\mathbb{R}_{+}, (1+z)dz)} \bigg] dz dz_2\nonumber\\
\leq & \frac{k_1 k(a) a^{1-\tau_2}} {1-\tau_2} \| \psi \|_{L^{\infty}(0, a)} \bigg[ \mathcal{G}(T)^2+\|g\|^2_{L^1(\mathbb{R}_{+}, (1+z)dz)} \bigg](1+b)^{\beta - 1} < \frac{\epsilon}{2}.
\end{align}

Finally, by applying $(\Gamma_2)$, $(\Gamma_5)$, and $g^n=0$ for $z>n$, the last integral on the right-hand of \eqref{conv220} can be evaluated as
\begin{align*}
\frac{1}{2} \bigg|  \int_{0}^{\infty} & \int_{n}^{\infty} \int_{0}^{a}  \psi(z)P(z|z_1; z_2) [1-E(z_1,z_2)] \Phi(z_1, z_2)\nonumber\\
 & ~~~~~~~~\times [g^n(z_1, t)g^n(z_2, t)-g(z_1, t)g(z_2, t)]dz  dz_1 dz_2 \bigg| \nonumber\\
\leq &   k_1 k(a) \| \psi \|_{L^{\infty}(0, a)}   \int_{0}^{\infty}  \int_{n}^{\infty} \int_{0}^{a}   z^{-\tau_2} (1+z_1)(1+ z_2)g(z_1, t)g(z_2, t)dz dz_1dz_2  \nonumber\\
\leq & \frac{k_1 k(a) a^{1-\tau_2}} {1-\tau_2} \| \psi \|_{L^{\infty}(0, a)}  \|g\|^2_{L^1(\mathbb{R}_{+}, (1+z)dz)} < \infty.
\end{align*}
The above term is bounded uniformly independent with $n$. Then, by Lebesgue dominated convergence theorem, the above integral goes to $0$ as $n \to \infty $, i.e.
\begin{align}\label{conv29}
\frac{1}{2} \bigg|  \int_{0}^{\infty} & \int_{n}^{\infty} \int_{0}^{a}  \psi(z)P(z|z_1; z_2) [1-E(z_1,z_2)] \Phi(z_1, z_2)\nonumber\\
 & ~~~~~~~~\times [g^n(z_1, t)g^n(z_2, t)-g(z_1, t)g(z_2, t)]dz  dz_1 dz_2 \bigg| \to 0.
\end{align}

Inserting estimates \eqref{conv2202}, \eqref{conv22} and \eqref{conv29} into \eqref{conv220}, we obtain
\begin{align}\label{conv30}
\frac{1}{2} \bigg| \int_{0}^{a} & \int_{b}^{\infty}\int_{0}^{z_1} \psi(z)P(z|z_1-z_2; z_2) [1-E(z_1-z_2, z_2)] \Phi(z_1-z_2, z_2)\nonumber\\
 & ~~~~~~~~\times [g^n(z_1-z_2, t)g^n(z_2, t)-g(z_1-z_2, t)g(z_2, t)]dz_2dz_1dz \bigg|  \to 0\ \ \text{as}\ \ n \to \infty.
\end{align}

 Using (\ref{conv28}) and (\ref{conv30}) into (\ref{passintegrals}), we get
\begin{align}\label{pass 1}
\bigg| \int_{0}^{a} & \psi(z) [\mathcal{B}_1(g^n)(z, t)-\mathcal{B}_1(g)(z,t)] dz\bigg| \to 0\ \ n \to \infty.
 \end{align}

We conclude that (\ref{luchon}) holds. Thus, this completes the proof of Lemma \ref{convergence lemma1}.
\end{proof}

Now, we are in a position to prove Theorem \ref{existmain theorem1} by employing above results.

\begin{proof}
 \textit{of Theorem \ref{existmain theorem1}}: Fix $a \in (0, n_k)$, $T>0$  and let us consider $(g^{n_k})_{n\in N}$ be a weakly convergent subsequence of the approximating solutions obtained from (\ref{equicontinuity f}). Hence, from  (\ref{equicontinuity f}) and for  $t\in [0,T]$, we get
\begin{align}\label{exist lim}
 g^{n_k}(z,t) \rightharpoonup  g(z,t)\ \  \text{in}\ \ {L^1((0, a), dz)}\ \ \text{as}\ \ n_k \to \infty.
\end{align}

 Let $ \psi \in \L^{\infty}(0, a)$. Then from Lemma \ref{convergence lemma1}, we have for each $ s \in [0,t]$
\begin{align}\label{main convergence1}
\int_{0}^{a}\psi(z)[(\mathcal{C}_1^{n_k}-\mathcal{B}_3^{n_k}+\mathcal{B}_1^{n_k})(g^{n_k})(z,s)-(\mathcal{C}_1-\mathcal{B}_3+\mathcal{B}_1)(g)(z,s)]dz \to 0 \ \ \text{as} \ \ n_k\to \infty.
\end{align}

In order to apply the dominated convergence theorem, the boundedness of the following integral is shown, by using $(\Gamma_2)$, $(\Gamma_5)$ (\ref{Total particles}), Lemma \ref{compactness1} $(i)$ and the repeated application of Fubini's theorem along with the transformation $z-z_1=z'$, $z_1=z_1'$, as
\begin{align}\label{exist domin}
\bigg|\int_{0}^{a} &\psi(z)[(\mathcal{C}_1^{n_k}-\mathcal{B}_3^{n_k}+\mathcal{B}_1^{n_k})(g^{n_k})(z,s)-(\mathcal{C}_1-\mathcal{B}_3+\mathcal{B}_1)(g)(z,s)] dz\bigg|\nonumber\\
  \leq &  \|\psi\|_{L^{\infty}(0, a)}\int_{0}^{a}\biggl[ \frac{1}{2}\int_0^z \Phi(z-z_1, z_1)[g^{n_k}(z-z_1,s)g^{n_k}(z_1,s)+g(z-z_1,s)g(z_1,s)]dz_1\nonumber\\
 &+\int_0^{n_k-z} \Phi(z, z_1) [g^{n_k}(z,s)g^{n_k}(z_1,s)+g(z,s)g(z_1,s)]dz_1+ \int_{n_k-z}^{\infty} \Phi(z, z_1)g(z,s)g(z_1,s)dz_1\nonumber\\
 &+ \frac{1}{2} \int_{z}^{\infty}\int_{0}^{z_1} P(z|z_1-z_2; z_2) \Phi(z_1-z_2, z_2)\nonumber\\
  &~~~~~~~~\times [g^{n_k}(z_1-z_2, s)g^{n_k}(z_2, s)+g(z_1-z_2, s)g(z_2, s)]dz_2dz_1 \bigg] dz\nonumber \\
\leq &  \|\psi\|_{L^{\infty}(0, a)}\biggl[ \frac{1}{2}\int_{0}^{a} \int_0^a \Phi(z, z_1)[g^{n_k}(z,s)g^{n_k}(z_1,s)+g(z,s)g(z_1,s)]dz_1dz\nonumber\\
 &+\int_{0}^{a}\int_0^{n_k} \Phi(z, z_1) [g^{n_k}(z,s)g^{n_k}(z_1,s)+g(z,s)g(z_1,s)]dz_1 dz \nonumber\\
 &+ \int_{0}^{a}\int_{n_k-z}^{\infty} \Phi(z, z_1)g(z,s)g(z_1,s)dz_1dz\nonumber\\
 &+ \frac{1}{2} \int_{0}^{\infty} \int_{0}^{\infty}\int_{0}^{z_1} P(z|z_1; z_2) \Phi(z_1, z_2) [g^{n_k}(z_1, s)g^{n_k}(z_2, s)+g(z_1, s)g(z_2, s)]dzdz_1 dz_2\bigg] \nonumber \\
 \leq & k_1 \|\psi\|_{L^{\infty}(0, a)}  \bigg[ \mathcal{G}(T)^2(3+N)+ \|g\|^2_{L^1(\mathbb{R}_{+}, (1+z)dz)}(5+N)  \bigg]< \infty.
\end{align}
Since the left-hand side of (\ref{exist domin}) is in $L^{1}((0, a), dz)$, then from (\ref{main convergence1}), (\ref{exist domin}) and the Lebesgue dominated convergence theorem, we obtain
\begin{align}\label{converge1}
\int_0^t\int_{0}^{a} \psi(z)[(\mathcal{C}_1^{n_k}-\mathcal{B}_3^{n_k}+\mathcal{B}_1^{n_k})(g^{n_k})(z,s)-(\mathcal{C}_1-\mathcal{B}_3+\mathcal{B}_1)(g)(z,s)]dzds \to 0 \ \ \text{as} \ \ k\to \infty.
\end{align}
Since $\psi$ is arbitrary and (\ref{converge1}) holds for $ \psi \in L^{\infty}(0, a)$ as $k \to \infty$, hence, by applying Fubini's theorem, we get
\begin{align}\label{exist last1}
\int_{0}^{t}(\mathcal{C}_1^{n_k}-\mathcal{B}_3^{n_k}+\mathcal{B}_1^{n_k})(g^{n_k})(z, s)ds\rightharpoonup  \int_{0}^{t} (\mathcal{C}_1-\mathcal{B}_3+\mathcal{B}_1)(g)(z, s)ds\  \text{ in}\ L^{1}((0, a), dz),\ \text{ as}\ k  \to \infty,
\end{align}
Then, by the definition of $(\mathcal{C}_1^{n_k}-\mathcal{B}_3^{n_k}+\mathcal{B}_1^{n_k})$,  we obtain
\begin{align}\label{exist last2}
g^{n_k}(z,t)= \int_{0}^{t} (\mathcal{C}_1^{n_k}-\mathcal{B}_3^{n_k}+\mathcal{B}_1^{n_k})(g^{n_k})(z, s)ds+ g^{n_k}_0(z),\ \ \ \text{for}\ t \in [0,T]
\end{align}
and thus, it follows from (\ref{exist last1}), (\ref{exist lim}) and (\ref{exist last2}) that
\begin{align*}
\int_{0}^{a}\psi(z)g(z,t)dz = \int_{0}^{t} \int_{0}^{a} \psi(z)(\mathcal{C}_1-\mathcal{B}_3+\mathcal{B}_1)(g)(z, s)dzds+\int_{0}^{a}\psi(z)g_0(z)dz,
\end{align*}
for any $\psi \in L^{\infty}(0, a)$.  Hence for the arbitrariness of $T$ $\& $ $a$, the uniqueness of limit and for all $\psi \in L^{\infty}(0, a)$, we have $g$ is a solution to (\ref{ccecfe1})--(\ref{in1}). This implies that for almost any $z \in (0, a)$, we have
\begin{align*}
g(z,t)=  g_0(z)+\int_{0}^{t} (\mathcal{C}_1-\mathcal{B}_3+\mathcal{B}_1)(g)(z, s)ds.
\end{align*}
We conclude that $g$ is a solution to \eqref{ccecfe1}--\eqref{in1} on $[0, \infty)$. This completes the proof of the existence Theorem \ref{existmain theorem1}.
\end{proof}

\subsection*{Acknowledgment}
PKB would like to thank University Grant Commission (UGC), $6405/11/44$, India, for assisting Ph.D fellowship and AKG wish to thank Science and Engineering Research Board (SERB), Department of Science and Technology (DST), India for their funding support through the project $YSS/2015/001306$ for completing this work.

\end{document}